\definecolor{chianti}{rgb}{0.6,0,0}
\definecolor{meretale}{rgb}{0,0,.6}
\definecolor{leaf}{rgb}{0,.35,0}
\tikzset{degil/.style={
            decoration={markings,
            mark= at position 0.5 with {
                  \node[transform shape] (tempnode) {$\backslash$};
                  }
              },
              postaction={decorate}
}
}
\definecolor{cverbbg}{gray}{0.93}
\numberwithin{equation}{section}
\newtheorem{Theoremx}{Theorem}
\newtheorem{theorem}{Theorem}[section]
\newtheorem{example}[theorem]{Example}
\theoremstyle{definition}
\theoremstyle{definition}
\newtheorem{proposition}[theorem]{Proposition}
\theoremstyle{definition}
\theoremstyle{remark}
\newcommand{\tpar}{\tau_{\operatorname{par}}}
\newcommand{\im}{\operatorname{Im}}
\newcommand{\Jac}{\operatorname{Jac}}
\newcommand{\Spec}{\operatorname{Spec}}
\newcommand{\ts}{\textsuperscript}
\newcommand{\Ext}{\operatorname{Ext}}
\newcommand{\NN}{\mathbb{N}}
\newcommand{\N}{\mathbb{N}}
\newcommand{\Z}{\mathbb{Z}}
\newcommand{\F}{\mathbb{F}}
\newcommand{\PP}{\mathbb{P}}
\newcommand{\fm}{\mathfrak{m}}
\newcommand{\fp}{\mathfrak{p}}
\newcommand{\fn}{\mathfrak{n}}
\newcommand{\fM}{\mathfrak{M}}
\newcommand{\cF}{\mathcal{F}}
\newcommand{\cP}{\mathcal{P}}
\crefname{Theoremx}{Theorem}{Theorems}
\crefname{setting}{Setting}{Settings}
\crefname{figure}{Figure}{Figures}
\renewcommand*{\eqref}[1]{%
  \hyperref[{#1}]{\textup{\tagform@{\ref*{#1}}}}%
}
\begin{document}

\title{\texorpdfstring{$F$}{F}-injectivity does not imply \texorpdfstring{$F$}{F}-fullness in normal domains}

\author[De Stefani]{Alessandro De Stefani}
\address{Dipartimento di Matematica, Universit\`{a} di Genova, Via Dodecaneso 35, 16146 Genova,
Italy}
\email{alessandro.destefani@unige.it}
\urladdr{\url{https://a-destefani.github.io/ads/}}

\author[Polstra]{Thomas Polstra}
\address{Department of Mathematics, University of Alabama, Tuscaloosa, AL 35487 USA}
\email{tmpolstra@ua.edu}
\urladdr{\url{https://thomaspolstra.github.io/}}

\author[Simpson]{Austyn Simpson}
\address{Department of Mathematics, University of Michigan, Ann Arbor, MI 48109 USA}
\email{asimpson2@bates.edu}
\urladdr{\url{https://austynsimpson.github.io/}}

\begin{abstract}
    We construct examples of noetherian three-dimensional local geometrically normal domains of prime characteristic which are $F$-injective but not $F$-full. Along the way, we find examples of two-dimensional local geometrically normal domains which are $F$-injective but not $F$-anti-nilpotent. A crucial theme of our constructions is the behavior of $F$-injectivity along a purely inseparable finite base change.
\end{abstract}

\maketitle

\section{Introduction}
Throughout this introduction, let $(R,\fm)$ be a $d$-dimensional reduced local ring which is essentially of finite type over an $F$-finite field $k$ of prime characteristic $p>0$. Recall that $R$ is \emph{$F$-injective} provided that the maps $F:H^i_\fm(R)\to H^i_\fm(R)$ induced by the Frobenius endomorphism are injective for all $i\leq d$. Introduced in \cite{Fed83}, $F$-injective rings are among the least restrictive (yet still mild) singularities detected by the Frobenius map and they have long been suspected to be the correct prime characteristic counterpart of the Du Bois singularities of the complex Minimal Model Program \cite{Sch09}. However, they often lack certain structural properties enjoyed by the other $F$-singularities. Most starkly, unlike $F$-regular and $F$-rational singularities, $F$-injective rings need not be normal nor Cohen--Macaulay. The relationship between $F$-injectivity and $F$-purity is more subtle; recall that $R$ is \emph{$F$-pure} under our current assumptions if the inclusion $R\hookrightarrow R^{1/p}$ admits an $R$-linear left inverse.

$F$-pure rings are easily seen to be $F$-injective, and both classes satisfy weak normality. However, the former exhibit certain mild behavior that is either false or unknown for the latter. For example:
\begin{enumerate}[label=(\Roman*)]
    \item (\emph{WN1}) If $R$ is $F$-pure, the normalization map $R\to R^{\text{N}}$ is unramified at height one primes \cite[Theorem 7.3]{SZ13}.\label{intro-list-1}
    \item (\emph{base change}) If $(R,\fm)$ is $F$-pure and contains a field $k$ isomorphic to $R/\fm$, then $R\otimes_k k'$ is $F$-pure for any finite extension $k\subseteq k'$ \cite[Proposition 5.4]{Ma14} (cf. \cite{Abe01}).\label{intro-list-2}
    \item (\emph{$F$-anti-nilpotence}) If $(R,\fm)$ is $F$-pure, then for every integer $i\in \N$ and every submodule $L\subseteq H^i_\fm(R)$ such that $F(L)\subseteq L$, the induced Frobenius action $F:\frac{H^i_\fm(R)}{L}\to \frac{H^i_\fm(R)}{L}$ is injective \cite[Theorem 1.1]{Ma14}.\label{intro-list-3}
    \item (\emph{Bertini}) If $X\subseteq \PP^n_k$ is an $F$-pure variety over an algebraically closed field $k$, then so is $X\cap H$ for a general hyperplane $H\subseteq \PP^n_k$ \cite[Theorem 6.1]{SZ13}.\label{intro-list-4}
    \item (\emph{Deformation to $F$-injectivity}) A local $\F_p$-algebra $(R,\fm)$ is $F$-injective provided that there exists a nonzero divisor $f\in \fm$ such that $R/fR$ is $F$-pure\footnote{Note that $R$ itself need not be $F$-pure by \cite{Fed83,Sin99}.} \cite[Corollary 4.3]{HMS14}.\label{intro-list-5}
\end{enumerate}

The four analogous statements \ref{intro-list-1}-\ref{intro-list-4} for $F$-injectivity are false in general; see \cite[Proposition 4.2]{Ene09}, \cite[Appendix]{CGM89}, \cite[Proposition 7.4]{SZ13}, and \cite[Example 2.16]{EH08} respectively. Furthermore, it is one of the preeminent open questions in $F$-singularity theory whether the conclusion of \ref{intro-list-5} holds under the weaker hypothesis that $R/fR$ is merely $F$-injective.

Geometrically, this is suggestive of $F$-injectivity being an extremal, limiting class within the $F$-singularity hierarchy, at least in the absence of a Frobenius splitting. On the other hand, the failure of $F$-injective rings to satisfy the analogues of \ref{intro-list-1}-\ref{intro-list-4} essentially all stems from a single source: the existence of weakly normal rings whose normalization maps ramify in codimension one \cite[Appendix]{CGM89}. It is thus tempting to wonder whether \emph{normality} or \emph{geometric normality} hypotheses rectify any of these shortcomings. Our first contribution that we highlight shows that \ref{intro-list-2} and \ref{intro-list-3} are still problematic for geometrically normal $F$-injective rings, even in the Cohen--Macaulay case.

\begin{Theoremx}(= \cref{thm:f-inj-not-f-anti})\label{thm:thm-A}
    For each prime integer $p>0$, there exists a local domain $(R,\fm,k)$ of characteristic $p$, of dimension $2$, and essentially of finite type over $k$ such that $R$ is geometrically normal over $k$ (hence Cohen--Macaulay) and $F$-injective, but not $F$-anti-nilpotent.
\end{Theoremx}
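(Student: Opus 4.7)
The plan is to exploit the interaction between $F$-injectivity and purely inseparable finite base change highlighted in the abstract. The key structural observation is that an $F$-anti-nilpotent ring stays $F$-injective after any finite purely inseparable extension of the coefficient field; this is a base-change principle visible from the compatibility of Frobenius-stable submodules of $H^i_\fm(R)$ with the tensor identification $H^i_{\fm R'}(R')\cong H^i_\fm(R)\otimes_k k'$ under a faithfully flat finite extension $R\to R':=R\otimes_k k'$. Consequently, to exhibit an $R$ that is $F$-injective but not $F$-anti-nilpotent it suffices to produce a geometrically normal, $F$-injective $R$ whose $F$-injectivity is destroyed by a single such base change. I therefore plan to work over an imperfect $F$-finite field $k$, for instance $k=\F_p(t)$, and engineer $R$ so that $R\otimes_k k(t^{1/p})$ fails to be $F$-injective.

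Concretely, I would let $R$ be the localization at the origin of a hypersurface $k[x,y,z]/(f)$ whose defining equation involves $t$ critically. Three competing requirements guide the choice of $f$. First, $R$ must be a two-dimensional normal local domain, so $f$ is irreducible and the singular locus is zero-dimensional; Cohen--Macaulayness is automatic from the hypersurface structure. Second, $R$ must be geometrically normal, which forces the Jacobian ideal of $f$ to remain primary to $\fm$ after every field extension of $k$, in particular along $k\to k^{1/p^\infty}$; this prevents $f$ from hiding $p$-th powers that would split or desingularize the equation upon extracting roots. Third, a Fedder-style computation must witness $F$-injectivity over $k$ yet fail over $k(t^{1/p})$, which amounts to a delicate change in the membership of $f^{p-1}$ (and of products $uf^{p-1}$ against socle representatives $u$) modulo Frobenius powers of $\fm$ after the inseparable extension.

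The verification then splits into three tasks. For $F$-injectivity of $R$ over $k$ I would apply Fedder's criterion, or an explicit \v Cech-complex computation of Frobenius on $H^2_\fm(R)$, to confirm injectivity on the socle. For the failure of $F$-anti-nilpotence the cleanest route is the base-change principle above: a Frobenius-nilpotent class in $H^2_{\fm R'}(R')$ with $R'=R\otimes_k k(t^{1/p})$ traces back along the faithfully flat extension to a Frobenius-stable submodule $L\subseteq H^2_\fm(R)$ on whose quotient Frobenius is not injective. For geometric normality the constraint on the Jacobian must be verified uniformly over all field extensions of $k$, which is the most rigid condition on $f$ and dictates the overall shape of the construction.

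The main obstacle is calibrating $f$ against these mutually opposed demands: geometric normality asks $f$ to behave uniformly under arbitrary base change, whereas failure of $F$-anti-nilpotence asks $f$ to behave pathologically under one specific inseparable base change. Threading this needle will likely force a targeted family of equations in which $t$ appears asymmetrically relative to $x,y,z$, for example as a coefficient in a monomial of carefully chosen $p$-compatible degree that is invisible to Fedder's criterion over $k$ but visible as a hidden $p$-th power after adjoining $t^{1/p}$. I would expect the construction to first crystallize at $p=2$, where the bookkeeping is smallest, and then extend to all primes by a parallel but more elaborate computation.
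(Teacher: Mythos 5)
Your diagnosis of the mechanism — an $F$-injective ring over an imperfect field whose $F$-injectivity is destroyed by a single finite purely inseparable base change — is exactly right and matches the paper's guiding idea. However, the proposed implementation has a fatal gap: you plan to take $R$ to be (the localization of) a hypersurface $k[x,y,z]/(f)$, and hypersurfaces are Gorenstein. For Gorenstein local rings, $F$-injectivity, $F$-anti-nilpotence, and $F$-purity all coincide (by Fedder's \cite[Lemma 3.3]{Fed83} together with \cite[Theorem 3.8]{Ma14}), as the paper itself points out right after Theorem~\ref{thm:thm-B}. So no hypersurface can ever be $F$-injective yet fail $F$-anti-nilpotence, and the construction cannot work in the form you describe. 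Relatedly, your appeal to ``Fedder's criterion'' to test $F$-injectivity of $R$ is off target: Fedder's criterion detects $F$-purity, which in the Gorenstein case would force $F$-anti-nilpotence.

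The paper escapes the Gorenstein trap by passing to a high Veronese $T = R^{(n)}$ of the weighted hypersurface with $n > a(R)$. This has two effects that your proposal is missing. First, $T$ is no longer Gorenstein, so the three $F$-conditions can genuinely separate. Second, and more to the point of the argument, the Veronese truncates the local cohomology of $T$ to nonpositive degrees; as a result, any $k$-subspace $V \subseteq [H^2_\fn(T)]_0$ is automatically annihilated by the homogeneous maximal ideal $\fn$ and so becomes an honest $F$-stable $T_\fn$-submodule of $H^2_\fn(T_\fn)$ (\cref{prop: Finding F-inj not anti-nilpotent}). That is what lets one \emph{directly} exhibit a Frobenius-stable submodule on whose quotient Frobenius is nilpotent, with no need for the base-change ``principle'' you invoke. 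That principle — that $F$-anti-nilpotence forces $F$-injectivity to persist under finite purely inseparable extensions — is asserted without proof, is not in the paper's Figure~\ref{diagram:F-singularities} (which only records the Ma implication geometrically $F$-injective $\Rightarrow$ $F$-anti-nilpotent, not its converse), and would in any case combine with the Gorenstein identification above to show your hypersurface plan is self-defeating: if $R$ were $F$-injective and a hypersurface, it would be $F$-anti-nilpotent, and then your principle would force $R\otimes_k k^{1/p}$ to remain $F$-injective, contradicting the intended design. Finally, note that the paper never claims the ambient hypersurface is itself $F$-injective; it only verifies Frobenius injectivity in degree $0$ and in sufficiently negative degrees (\cref{prop:f-inj-in-degree-0-p-equals-2}, \cref{prop:f-inj-in-degree-0-p-at-least-3}), and the Veronese removes precisely the degrees that are not controlled.
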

The ring $R$ above arises as the localization $S_\fn$ of a standard graded $\F_p(t)$-algebra $S$ at its homogeneous maximal ideal $\fn$, and the purely inseparable base change $S\otimes_{\F_p(t)} \F_p(\sqrt[p]{t})$ is not $F$-injective. Rings satisfying the conclusion of \ref{intro-list-3} are known as \emph{$F$-anti-nilpotent}, a quality which in \cite{MQ18} was studied alongside a related notion called \emph{$F$-full} (see \cref{subsec:F-singularities}) for its advantageous role in the deformation problem for $F$-injectivity. It is proven in \emph{op. cit.} that if $R/fR$ is either $F$-anti-nilpotent or $F$-injective and $F$-full, then $R$ is $F$-injective. We also recall that $F$-fullness is an instance of a characteristic-agnostic property known as \emph{cohomological fullness} developed in \cite{DDM21}; independently, Koll\'{a}r and Kov\'{a}cs referred to rings with this property in \cite{KK18} as having \emph{liftable local cohomology}, also through the lens of studying deformation phenomena. One might then hope that \emph{normal} $F$-injective rings are $F$-full with the aim of resolving the deformation question. We show that this is not the case.

\begin{Theoremx}(= \cref{thm:not-f-full})\label{thm:thm-B}
    For each prime integer $p>0$, there exists a $3$-dimensional local domain $(R,\fm,k)$ of characteristic $p$ and essentially of finite type over $k$ such that $R$ is $F$-injective, geometrically normal over $k$, but not $F$-full. In particular, $R$ is not Cohen--Macaulay.
\end{Theoremx}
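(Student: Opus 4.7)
The plan is to promote the 2-dimensional example produced in \cref{thm:thm-A} to a 3-dimensional example by adjoining a regular parameter, so that the failure of $F$-anti-nilpotence of the hyperplane section forces the failure of $F$-fullness upstairs.

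Let $R_0$ denote the 2-dimensional Cohen--Macaulay, geometrically normal, $F$-injective but non-$F$-anti-nilpotent local $k$-algebra from \cref{thm:thm-A}, with maximal ideal $\fm_0$. The failure of $F$-anti-nilpotence yields an $F$-stable submodule $L \subsetneq H^2_{\fm_0}(R_0)$ such that the induced Frobenius action on $H^2_{\fm_0}(R_0)/L$ has nonzero kernel. I would then consider $R := R_0[z]_{(\fm_0,z)}$, which is a $3$-dimensional local domain essentially of finite type over $k$. Geometric normality of $R$ over $k$ is immediate, since polynomial extension and localization preserve geometric normality. A standard \v{C}ech-complex computation provides a Frobenius-equivariant identification
\[
H^3_\fm(R) \;\cong\; H^2_{\fm_0}(R_0) \otimes_{R_0} H^1_{(z)}(R_0[z]) \;=\; \bigoplus_{j \ge 1} H^2_{\fm_0}(R_0)\cdot z^{-j},
\]
with $F(\alpha\, z^{-j}) = F(\alpha)\, z^{-pj}$, and $H^i_\fm(R) = 0$ for $i \neq 3$ by Cohen--Macaulayness. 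Injectivity of $F$ on $H^2_{\fm_0}(R_0)$ then immediately gives injectivity of $F$ on $H^3_\fm(R)$, establishing $F$-injectivity of $R$.

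To show $R$ is not $F$-full, I would lift $L$ to the $F$-stable submodule $\widetilde{L} := \bigoplus_{j \ge 1} L\cdot z^{-j} \subseteq H^3_\fm(R)$. The nonzero kernel of $F$ on $H^2_{\fm_0}(R_0)/L$ propagates to a nonzero kernel of $F$ on $H^3_\fm(R)/\widetilde L$, producing the Frobenius-nilpotent behavior on an $F$-stable quotient of the top local cohomology that obstructs $F$-fullness in the sense recalled in \cref{subsec:F-singularities}.

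The hard part will be the last step: precisely matching the constructed Frobenius-nilpotent obstruction in $H^3_\fm(R)/\widetilde{L}$ with the definition of failure of $F$-fullness, which is strictly weaker than the failure of $F$-anti-nilpotence and hence requires more careful bookkeeping than simply producing a non-$F$-anti-nilpotent example in dimension three. If the naive polynomial extension above does not itself detect non-$F$-fullness, I would fall back on a more refined construction—for instance, a hypersurface inside $R_0[z]$ (or a carefully chosen finite extension) designed to encode the purely inseparable base change phenomena that drive \cref{thm:thm-A}—engineered to produce a sharper obstruction in the top local cohomology that is visible at the level of $F$-fullness while retaining both $F$-injectivity and geometric normality.
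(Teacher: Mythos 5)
Your proposed construction cannot work as stated, for a reason the paper itself flags just after the statement of Theorem~\ref{thm:thm-B}: a Cohen--Macaulay local ring is automatically $F$-full. The ring $R_0$ from \cref{thm:thm-A} is normal of dimension two, hence Cohen--Macaulay, and $R := R_0[z]_{(\fm_0,z)}$ is again Cohen--Macaulay. Consequently $H^i_\fm(R)=0$ for $i<3$, and the only nontrivial surjectivity $\cF_R(H^3_\fm(R))\to H^3_\fm(R)$ in the definition of $F$-fullness holds for free (it is a statement about top local cohomology, which is right exact; see \cite[Remark 2.4(3)]{MQ18}). So this $R$ is $F$-full regardless of what $R_0$ does.

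A second, independent gap is the claim that an $F$-stable submodule $\widetilde{L}\subseteq H^3_\fm(R)$ with nilpotent Frobenius on the quotient ``obstructs $F$-fullness.'' That is the defining failure of $F$-\emph{anti-nilpotence}, which is strictly stronger than $F$-fullness; \cref{thm:thm-A} itself already furnishes a ring that is $F$-full (being Cohen--Macaulay) yet not $F$-anti-nilpotent, so such a submodule carries no information about $F$-fullness. To detect failure of $F$-fullness one must instead exhibit a \emph{failure of surjectivity} of $\cF_R(H^i_\fm(R))\to H^i_\fm(R)$ for some $i<\dim R$, which is only possible when $H^i_\fm(R)\neq 0$ for some $i<\dim R$, i.e.\ when $R$ is not Cohen--Macaulay. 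You would therefore need to manufacture a \emph{non}-Cohen--Macaulay normal ring, not a hypersurface in $R_0[z]$ or a finite extension of it.

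The paper's construction achieves exactly this by taking the Segre product $A = T\# k[u,v]$ with $T = R^{(n)}$ a suitably chosen Veronese subring of the hypersurface from \cref{ex:main-ex}. The K\"{u}nneth formula \eqref{eq:Kunneth} shows $H^2_{\fm_A}(A)\cong [H^2_\fn(T)]_0 \neq 0$, so $A$ is normal of dimension $3$ but not Cohen--Macaulay. Geometric normality and $F$-injectivity of $A$ come from purity of the inclusions $A \hookrightarrow T\otimes_k k[u,v]$ and from \cite[Theorem~A, Theorem~3.8]{DM24}. Failure of $F$-fullness is detected \emph{indirectly} through inseparable base change: the degree-zero kernel of Frobenius found in \cref{prop:f-inj-in-degree-0-p-equals-2}\ref{prop:char2-example-not-geo-F-inj-deg-0} and \cref{prop:f-inj-in-degree-0-p-at-least-3}\ref{prop:char-p>2-example-not-geo-F-inj-deg-0} shows that Frobenius is not injective on $H^2_{\fm_{A'}}(A')$ for $A'=A\otimes_k k^{1/p}$; since this module is a finite-dimensional $k^{1/p}$-vector space, \cref{F-full} forces $A'_{\fm_{A'}}$ to be non-$F$-full, and the flat local ascent \cref{F-full-ascent} (applied to $A_{\fm_A}\to A'_{\fm_{A'}}$, whose closed fiber is a field) shows that $A_{\fm_A}$ cannot be $F$-full either. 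The inseparable base change, which you correctly identified as the driving force behind \cref{thm:thm-A}, is essential here as well, but it must be fed through the $F$-fullness ascent/descent machinery of \cite{DDM21} rather than through submodule quotients.
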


With \cref{thm:thm-A,thm:thm-B} in mind, the following diagram summarizes the implications, or lack thereof, between the various $F$-singularities in local rings $(R,\fm,k)$ essentially of finite type over $k$. When we say that such a ring is \emph{geometrically $\cP$} (over $k$), we mean that $R\otimes_k L$ is $\cP$ for all finite extensions $k\subseteq L$, including purely inseparable ones.

\begin{figure}[htp!]
    \centering
    {\tiny
    \begin{equation*}
    \begin{tikzcd}[column sep=tiny,row sep=0.6cm]
        \parbox{2cm}{\centering geometrically normal}\arrow[dddddddd,Rightarrow]\arrow[rrrrrrrr,Rightarrow]&&&&&&&&\text{normal}\arrow[dddddddd,Rightarrow]\\
        &&&&&&&&\\
        &&\parbox{1.5cm}{\centering strongly $F$-regular}\arrow[rrrr,Rightarrow]\arrow[uull,Rightarrow]\arrow[dddd,Rightarrow]&&&&\text{$F$-rational}\arrow[uurr,Rightarrow]\arrow[dddl,Rightarrow,degil,sloped,dashed,"\text{\cite{QGSS24}}"' pos =.55,bend left=8]\arrow[dl,dashed,Rightarrow,degil,end anchor={[yshift=+2ex,xshift=-2.7ex]},sloped,"\text{\cite{QGSS24}}" pos = .65]\arrow[dddd,Rightarrow]\arrow[ddr,Rightarrow,bend left]&&\\
        &&&\parbox{1.5cm}{\centering geometrically strongly $F$-regular}\arrow[rr,Rightarrow]\arrow[ul,Leftrightarrow,start anchor={[xshift=+.75ex]},sloped,"\text{\cite{Abe01}}"]\arrow[dd,Rightarrow]&&\parbox{2cm}{\centering geometrically $F$-rational}\arrow[dd,Rightarrow]&&&\\
        &&&&&&&\text{Cohen--Macaulay}&\\
        &&&\parbox{2cm}{\centering geometrically $F$-pure}\arrow[rr,Rightarrow]\arrow[dl,Leftrightarrow,sloped,start anchor={[yshift=+3ex,xshift=+.5ex]},end anchor = {[yshift=+.5ex]},"\text{\cite{Abe01,Ma14}}"']&&\parbox{2cm}{\centering geometrically $F$-injective}&&&\\
        &&\text{$F$-pure}\arrow[ddll,Rightarrow,sloped,"\text{\cite{SZ13}}"]\arrow[rr,Rightarrow,"\text{\cite{Ma14}}"']&&\text{$F$-anti-nilpotent}\arrow[dr,Rightarrow,bend right=15,sloped,"\text{\cite{MQ18}}"']\arrow[from=uuuurr,bend right = 60, sloped,Rightarrow,crossing over,near end,"\text{\cite{Ma14}}"]\arrow[rr,Rightarrow,bend right=8]&&\text{$F$-injective}\arrow[ll,bend right=8,Rightarrow,sloped,dashed,degil,"\text{Thm. \ref{thm:thm-A}}" near start,"\text{\cite{EH08}}" near end]\arrow[dl,degil,sloped,dashed,Rightarrow,"\text{Thm. \ref{thm:thm-B}}"' pos=0.01,"\text{\cite{EH08}}"' near end]\arrow[ddrr,Rightarrow,sloped,"\text{\cite{Sch09}}"]\arrow[ul,dashed,degil,Rightarrow,sloped, "\text{Thm. \ref{thm:f-inj-not-f-anti}}" pos=0.15,"\text{\cite{EH08}}" near end]&&\\
        &&&&&\text{$F$-full}\arrow[from=uuurr,bend left=40,Rightarrow,sloped,crossing over,"\text{\cite{MQ18}}"', near end]&&&\\
        \parbox{1.5cm}{\centering geometrically (WN1)}\arrow[rrrrrrrr,Rightarrow]&&&&&&&&\parbox{1cm}{\centering weakly normal}
    \end{tikzcd}
\end{equation*}
}
    \caption{Relationships between $F$-singularities for local rings $(R,\fm,k)$ which are essentially of finite type over an $F$-finite field $k$.}\label{diagram:F-singularities}
\end{figure}
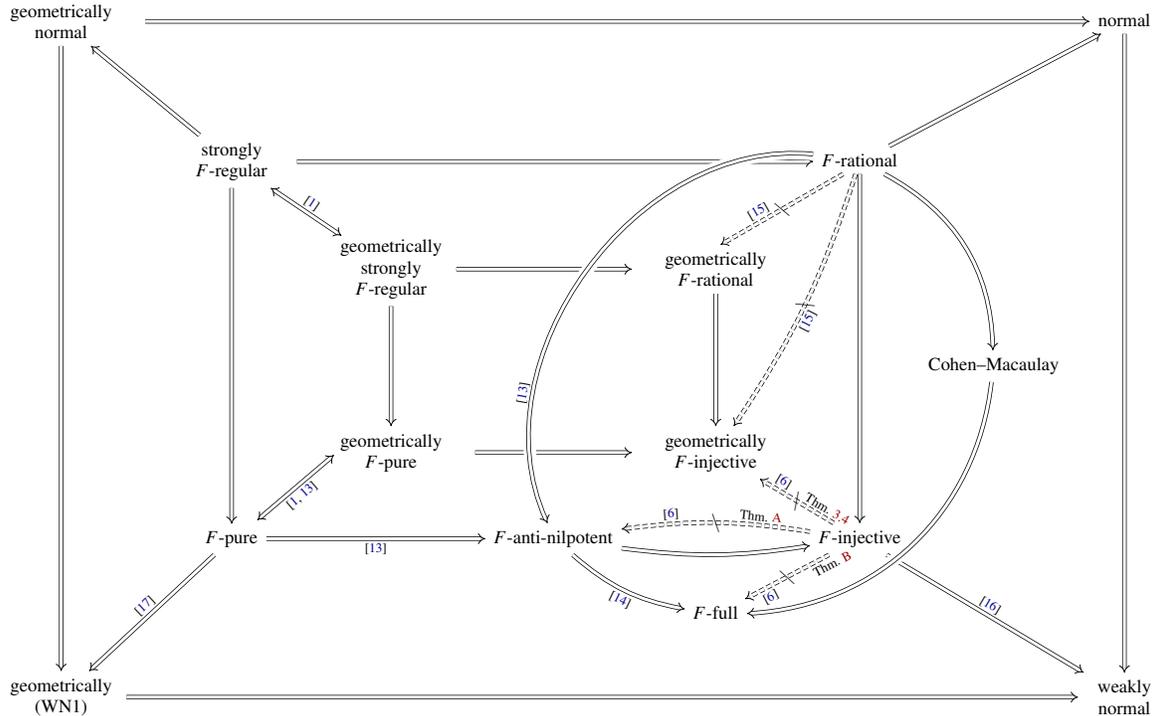

Note that the examples in \cref{thm:thm-A,thm:thm-B} have minimal dimension; this is obvious in \cref{thm:thm-A}. Dimension three is optimal in \cref{thm:thm-B} because normality implies Cohen--Macaulayness in dimension two, hence $F$-full \cite[Remark 2.4(3)]{MQ18}. We also mention that the examples exhibiting \cref{thm:thm-A} are necessarily not Gorenstein and moreover must have imperfect residue field. Indeed, $F$-purity, $F$-anti-nilpotence, and $F$-injectivity all coincide in the Gorenstein setting by \cite[Lemma 3.3]{Fed83} and \cite[Theorem 3.8]{Ma14}. Also, since $R$ has an isolated singularity, its parameter test ideal $\tpar(R)$ is $\fm$-primary and \cite[Proposition 4.21]{EH08} dictates that $F$-injective rings with this property are $F$-anti-nilpotent whenever their residue fields are perfect.

We emphasize that the loss of $F$-injectivity along a purely inseparable base change is the force which drives both \cref{thm:thm-A,thm:thm-B}. This pathology resides solely in the singularities on the right side of \cref{diagram:F-singularities}. Enescu's example \cite{Ene09} of an $F$-injective ring which is not geometrically $F$-injective is not normal, whereas the rings found in \cite{QGSS24} are $F$-rational (hence normal) but not geometrically $F$-injective. However, the dimension of the rings found in \cite{QGSS24} grows linearly with the characteristic. Thus, yet another novel feature of the examples of the present article is that they exhibit for all characteristics a normal $F$-injective ring of fixed dimension two which is not geometrically $F$-injective, and this dimension is minimal.

\subsection*{Acknowledgments} We thank the referee for a careful reading of the paper and for several useful suggestions. De Stefani was partially supported by the MIUR Excellence Department Project CUP D33C23001110001, PRIN 2022 Project 2022K48YYP, and by INdAM-GNSAGA.

\section{Preliminaries}
\subsection{Graded ring considerations}
Let $k$ be a field, $R$ a $d$-dimensional $\N$-graded ring finitely generated over the degree zero component $R_0=k$, and $\fm=R_+$ the homogeneous maximal ideal of $R$. Let $x_1,\ldots, x_d$ be a homogeneous system of parameters for $R$. The $i$\ts{th} local cohomology module $H^i_\fm(R)$ supported at $\fm$ is computed as the $i$\ts{th} cohomology of the \v{C}ech complex on $x_1,\ldots, x_d$:
$$\text{\v{C}}(R;x_1,\ldots, x_d):\hspace{1cm} 0\to R \to \bigoplus\limits_{i} R_{x_i}\to \cdots\to \bigoplus\limits_{i} R_{x_1\cdots \hat{x_i}\cdots x_d}\to\cdots \to R_{x_1\cdots x_d}\to 0.$$
The $R$-module $H^i_\fm(R)$ is then $\Z$-graded (see \cite{GW78}); for $n\in \Z$ we denote by $[H^i_\fm(R)]_n$ its degree $n$ component. In the case of the top cohomology $$H^d_\fm(R)\cong \frac{R_{x_1\cdots x_d}}{\sum\limits_i \im(R_{x_1\cdots \hat{x_i}\cdots x_d})},$$ a \v{C}ech class represented by $\eta=\left[\dfrac{r}{x_1^{n_1}\cdots x_d^{n_d}}\right]$ where $r\in R$ is a homogeneous element then has degree $$\deg(\eta):=\deg(r) - \sum_i n_i\deg(x_i).$$ If $R$ is Cohen--Macaulay, then the \emph{$a$-invariant of $R$} is defined to be $$a(R):=\max\{n\mid [H^d_\fm(R)]_n\neq 0\}.$$

We next review the notions of Veronese subrings and Segre products, following \cite{GW78}. Let $$R = \bigoplus\limits_{n\geq 0} R_n, \hspace{2cm} S = \bigoplus\limits_{n\geq 0} S_n$$ be $\N$-graded rings finitely generated over a common field $k = R_0=S_0$ and denote by $\fm_R=R_+$ and $\fm_S = S_+$ the respective homogeneous maximal ideals. Let $d=\dim(R)$ and $d' = \dim(S)$, and suppose in the sequel that $d,d'\geq 2$. For each positive integer $m\in\N$ the \emph{$m$\ts{th} Veronese subring of $R$} is the $\N$-graded ring given by
$$R^{(m)}:=\bigoplus\limits_{n\geq 0} R_{mn}.$$ Fix a positive integer $m\in \N$ and let $\fn$ be the homogeneous maximal ideal of $R^{(m)}$. By \cite[Theorem 3.1.1]{GW78}, 
$$H^i_\fn(T) \cong \bigoplus\limits_{n\in \Z} [H^i_\fm(R)]_{mn}.$$

The \emph{Segre product of $R$ and $S$}, denoted $R\#S$, is the $\N$-graded ring given by
$$R\# S:=\bigoplus\limits_{n\geq 0} R_n\otimes_k S_n.$$ Similarly, if $M$ and $N$ are $\Z$-graded $R$ (resp. $S$)-modules then $$M\# N :=\bigoplus\limits_{n\in \Z} M_n\otimes_k N_n$$ is naturally a $\Z$-graded $R\# S$-module. If $R$ and $S$ have depth at least $2$, then $\fm_R\#\fm_S$ is the homogeneous maximal ideal of $R\#S$ and the local cohomology modules of $R\#S$ are determined by the  K\"{u}nneth-type formula for local cohomology \cite[Theorem 4.1.5]{GW78},
\begin{align}
    H^{\ell}_{\fm_R\#\fm_S}(R\#S) \cong \left(R\# H^\ell_{\fm_S}(S)\right)\oplus \left(H^\ell_{\fm_R}(R)\# S\right) \oplus \bigoplus\limits_{i+j=\ell+1} \left(H^i_{\fm_R}(R) \# H^j_{\fm_S}(S)\right).\label{eq:Kunneth}
\end{align}
We have the pure ring inclusion $R\# S\hookrightarrow R\otimes_{k} S$, the target of which admits an $\N^2$-grading under which the degree $(m_1,m_2)$ component is $R_{m_1}\otimes_{k}S_{m_2}$. The homogeneous maximal ideal of $R\otimes_{k}S$ is the ideal generated by all elements not of degree $(0,0)$. When $R = S$, we refer to $R\otimes_{k} R$ as the \emph{enveloping algebra} of $R$.

\subsection{Hypersurface considerations}\label{subsection-hypersurface} In this paper we will primarily be concerned with two-dimensional hypersurfaces over a field $k$. Let $S=k[x,y,z]$, $f\in S$, and $R=S/fS$. The Jacobian ideal $\Jac(f)=(f_x,f_y,f_z)$ determines non-smooth points of $R$ over $k$. A prime $\fp$ belongs to $V((f,\Jac(f)))\subseteq \Spec(S)$ if and only if there exists a finite extension $k\to k'$ so that $R_{\fp}\otimes_k k'$ is not regular, see \cite[\href{https://stacks.math.columbia.edu/tag/038W}{Tag 038W}]{stacks-project}. Serre's criterion for normality implies that the hypersurface $R$ is geometrically normal if and only if $S/(f,\Jac(f))$ is supported at finitely many maximal ideals.

Now consider $S$ with an $\N$-grading where the variables $x,y,z$ have positive degree, and let $f$ be a homogeneous polynomial which is monic of degree $m$ with respect to the variable $x$. Then the hypersurface $R = S/fS$ is a free module over the polynomial subring $k[y,z]$ with basis $\{1,x,\ldots, x^{m-1}\}$. Computing the local cohomology module of $R$ with support in the homogeneous maximal ideal $\fm$ using the system of parameters $y,z$ yields a $k$-basis for $H^2_\fm(R)$ of \v{C}ech classes
\begin{align}
    \eta = \left[\frac{x^a}{y^bz^c}\right]\label{eq:hypersurface-basis}
\end{align}
where $0\leq a\leq m-1$ and $b$ and $c$ are positive integers. In particular, the $a$-invariant of $R$ is 
\begin{align}
    a(R)=(m-1)\deg(x)-(\deg(y)+\deg(z)).\label{eq:a-inv}
\end{align}

In the sequel, we will also require an understanding of the local cohomology of the enveloping algebra $R\otimes_k R$. To that end, let $u,v,w$ denote variables for the second copy of the polynomial ring $S$, let $f'\in k[u,v,w]$ denote the image of $f$ under $x\mapsto u, y\mapsto v,z\mapsto w$, and set $R'=k[u,v,w]/(f')$, with homogeneous maximal ideal $\fm'$. Then $R\otimes_{k}R'\cong k[x,y,z,u,v,w]/(f,f')$ is an $\N^2$-graded ring with homogeneous maximal ideal $\mathfrak{M} = (x,y,z,u,v,w)$ and admits the homogeneous regular sequence $y,z,v,w$. The enveloping algebra $R\otimes_{k}R'$ is then free over $k[y,z,v,w]$ with basis elements $\{z^{\ell_1}w^{\ell_2}\mid 0\leq \ell_1,\ell_2\leq m-1 \}$, and therefore a $k$-basis for $H^4_{\mathfrak{M}}(R\otimes_k R)$ given by
\begin{align}
\left[\frac{x^{a_1}u^{a_2}}{y^{b_1}z^{c_1}v^{b_2}w^{c_2}}\right]\label{eq:enveloping}
\end{align}
where $0\leq a_1,a_2\leq m-1$ and the $b_i$ and $c_i$ are positive integers. Equivalently, if $\eta = \left[\frac{g}{h}\right]\in H^2_{\fm}(R)$ then let $\eta'\in H^2_{\fm'}(R')$ be the element represented by the \v{C}ech class of the rational function obtained by the change of variables of the rational function $f/g$. If $\mathfrak{M}$ is the $\N^2$-homogeneous maximal ideal of $R\otimes_k R'$ generated by homogeneous elements not of degree $(0,0)$ and if $\Lambda$ is a $k$-basis of $H^2_{\fm}(R)$, then $\{\eta_1\otimes\eta_2'\mid \eta_1,\eta_2\in \Lambda\}$ is a $k$-basis of $H^4_{\mathfrak{M}}(R\otimes_{k} R')\cong H^2_{\fm}(R)\otimes_{k}H^2_{\fm'}(R')$. 

\subsection{\texorpdfstring{$F$}{F}-singularities}\label{subsec:F-singularities}
Let $(R,\fm,k)$ be a local ring of prime characteristic $p>0$ and of Krull dimension $d$. We say that $R$ is \emph{$F$-injective} if the Frobenius maps $F:H^i_\fm(R)\to H^i_\fm(R)$ are injective for all $i\leq d$. A locally excellent (not necessarily local) $\F_p$-algebra $R$ is $F$-injective when $R_\fp$ is so for all $\fp\in\Spec R$. Specializing to our setting, let $(R,\fm,k)$ be an $\N$-graded ring of prime characteristic $p>0$ finitely generated over $R_0=k$. Then $R$ is $F$-injective precisely when the Frobenius map on local cohomology supported at the homogeneous maximal ideal $\fm = R_+$ is injective (see \cite[Theorem 5.10]{DM24}).

In our analysis of when certain Veronese subrings are $F$-injective, we will need to consider hypersurfaces whose local cohomology modules are acted upon injectively by Frobenius in ``sufficiently negative" degree.

\begin{proposition}(\emph{cf.} \cite[Remark (iii) on p. 251]{Har95})\label{theorem:F-inj-in-negative-degree}
    Let $S = k[x_1,\ldots, x_s]$ be an $\N$-graded polynomial ring over a field $k$ of prime characteristic $p>0$ with $\deg(x_i) = e_i>0$ and homogeneous maximal ideal $\fm$. Let $f\in S$ be a homogeneous polynomial such that $(x_1^{k_1},\ldots, x_s^{k_s})\subseteq \Jac(f)$, where $\Jac(f)$ is the Jacobian ideal of $f$. Let $R = S/fS$. If $n>0$ is such that $pn > \sum\limits_{i=1}^s k_i e_i - 2\deg(f)$, then the Frobenius action $F: [H^{s-1}_{\fm_R}(R)]_\ell\to [H^{s-1}_{\fm_R}(R)]_{\ell p}$ is injective for all $\ell\leq -n$.
\end{proposition}
\begin{proof}
This has already been proven in the case where $k$ is perfect in \cite{Har95}; we explain how to extend this to imperfect fields. Let $L$ be any perfect field extension of $k$ (for example, $L=k^\infty$) and let $T=S \otimes_k L$. It is immediate to check that $\Jac(f)T = \Jac(fT)$, so that all of our assumptions are preserved when extending the scalars to $L$. Let $\fn = \fm T = \fm \otimes_k L$ be the homogeneous maximal ideal of $T$. As $L$ is perfect we have that the Frobenius action $\left[H^{s-1}_{\fn}(T/fT)\right]_{\ell} \to \left[H^{s-1}_{\fn}(T/fT)\right]_{\ell p}$ is injective for all $\ell \leq -n$. Since $\left[H^i_{\fn}(T/fT)\right]_\ell \cong \left[H^i_{\fm}(S/fS)\right]_\ell \otimes_k L$ for all $i,\ell \in \Z$, we conclude that the map $\left(\left[H^{s-1}_{\fm}(S/fS)\right]_{\ell} \to \left[H^{s-1}_{\fm}(S/fS)\right]_{\ell p}\right) \otimes_k L$ is injective for all $\ell \leq -n$. It follows that the map $$\left[H^{s-1}_{\fm}(S/fS)\right]_{\ell} \to \left[H^{s-1}_{\fm}(S/fS)\right]_{\ell p}$$ must be injective as well for all $\ell \leq -n$.
\end{proof}

A local ring $(R,\fm)$ is said to be \emph{$F$-anti-nilpotent} if for all $i\leq \dim(R)$ and all submodules $N\subseteq H^i_\fm(R)$ such that $F(N)\subseteq N$, the induced Frobenius action $$F:\frac{H^i_\fm(R)}{N}\to \frac{H^i_\fm(R)}{N}$$ is injective. From the definition one sees immediately that $F$-anti-nilpotent rings are $F$-injective. 

We only discuss $F$-anti-nilpotence 
for \emph{local rings}. This is primarily due to the analogue of \cite[Theorem 5.10]{DM24} being unknown for $F$-anti-nilpotence; that is, whether $F$-anti-nilpotence of an $\N$-graded ring may be checked after (non-homogeneous) localization at its homogeneous maximal ideal. Such a result would follow from openness of the $F$-anti-nilpotent locus for an $F$-finite ring; however, at this point even the more modest question of whether $F$-anti-nilpotence \emph{localizes} remains unanswered.

Descriptions of Frobenius actions on the local cohomology modules of an $\N$-graded ring and its Veronese subalgebras yield a natural method to construct $F$-injective rings that are not $F$-anti-nilpotent. The content of \cref{sec:ex} lies in producing examples of rings which satisfy the assumptions of the following proposition.

\begin{proposition}
    \label{prop: Finding F-inj not anti-nilpotent}
    Let $k$ be a field of prime characteristic $p>0$ and $R$ an $\NN$-graded Noetherian $k$-algebra with homogeneous maximal ideal $\fm$. Assume that $R$ enjoys the following properties:
    \begin{enumerate}[label=(\roman*)]
        \item There exists an integer $n_0\in\N$ such that for each $i$ the Frobenius action $F:[H^i_{\fm}(R)]_{-n}\to [H^i_{\fm}(R)]_{-np}$ is injective if $n\geq n_0$.\label{prop: Finding F-inj not anti-nilpotent-1}
        \item For each $i$ the Frobenius action $F:[H^i_{\fm}(R)]_{0}\to [H^i_{\fm}(R)]_{0}$ is injective.
        \item There exists an integer $j\in\N$ and a $k$-vector space $V\subseteq [H^j_{\fm}(R)]_{0}$, such that $F(\eta)\in V$ for some $\eta \not \in V$.
    \end{enumerate}
    Let $n\geq n_0$ as in \ref{prop: Finding F-inj not anti-nilpotent-1} be so that $\left[H^i_{\fm}(R)\right]_n=0$ for all $i$ (for example, we may choose $n>a(R)$). Let $T:=R^{(n)}$ be the $n$\ts{th} Veronese subalgebra of $R$ with homogeneous maximal ideal $\fn$. Then the localization $T_\fn$ is $F$-injective but not $F$-anti-nilpotent.
\end{proposition}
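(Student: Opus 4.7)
The strategy is to verify $F$-injectivity and failure of $F$-anti-nilpotence separately, both via the Veronese identification $H^i_\fn(T) \cong \bigoplus_{m \in \Z} [H^i_\fm(R)]_{nm}$ from \cite[Theorem 3.1.1]{GW78} recalled in the preliminaries, under which the degree-$m$ component on the left corresponds to the degree-$nm$ component on the right. Since local cohomology of a finitely generated graded $k$-algebra vanishes in sufficiently high positive degree, I may enlarge $n$ (still with $n \ge n_0$) to ensure $[H^i_\fm(R)]_\ell = 0$ for every $\ell \ge n$, so that $[H^i_\fn(T)]_m = 0$ whenever $m \ge 1$.

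\emph{$F$-injectivity of $T_\fn$.} The graded criterion \cite[Theorem 5.10]{DM24} reduces the problem to injectivity of Frobenius on each graded piece of $H^i_\fn(T)$. For $m \ge 1$ both source and target vanish by the choice of $n$. For $m = 0$ this is exactly hypothesis (ii). For $m \le -1$ the relevant map identifies with $F \colon [H^i_\fm(R)]_{nm} \to [H^i_\fm(R)]_{nmp}$, and since $nm \le -n \le -n_0$, injectivity follows from hypothesis (i).

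\emph{Failure of $F$-anti-nilpotence.} The key structural observation is that $T_{\ge 1}$ annihilates $[H^j_\fn(T)]_0$: multiplication by any element of $T_s = R_{ns}$ with $s \ge 1$ shifts degree zero into a positive graded piece, which now vanishes. Consequently, every $k$-subspace of $[H^j_\fn(T)]_0$ is automatically a $T$-submodule of $H^j_\fn(T)$. Given $V$ and $\eta$ furnished by (iii), let $\widetilde V := \sum_{s \ge 0} F^s(V)$ be the smallest $F$-stable $k$-subspace of $[H^j_\fm(R)]_0 = [H^j_\fn(T)]_0$ containing $V$; by the observation above, $\widetilde V$ is an $F$-stable $T$-submodule of $H^j_\fn(T)$ that contains $F(\eta)$. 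When $\eta \notin \widetilde V$ — which is the operative content of (iii) in the applications, where $V$ is chosen to be $F$-stable from the outset so that $\widetilde V = V$ — the induced Frobenius on $H^j_\fn(T)/\widetilde V$ kills the nonzero image of $\eta$, demonstrating non-$F$-anti-nilpotence at the graded level. Because $H^j_\fn(T)$ is $\fn$-power torsion, the natural isomorphism $H^j_\fn(T) \cong H^j_{\fn T_\fn}(T_\fn)$ preserves both the $T$-module structure and the Frobenius action, so this witness transports to $T_\fn$.

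The one subtle point is ensuring that the $F$-closure $\widetilde V$ still excludes $\eta$; this is automatic whenever (iii) is realized with a $V$ that is itself $F$-stable, as it will be in the examples of \cref{sec:ex}, and it is in this form that the proposition will be invoked. Beyond this mild upgrade of (iii), the argument is routine bookkeeping with the Veronese grading and the graded Frobenius.
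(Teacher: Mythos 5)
Your argument follows the same route as the paper's proof: identify the graded pieces of $H^i_\fn(T)$ with those of $H^i_\fm(R)$ via the Veronese correspondence $[H^i_\fn(T)]_t \cong [H^i_\fm(R)]_{tn}$, verify $F$-injectivity degree by degree using hypotheses (i) and (ii) together with vanishing in positive degrees, and exploit the fact that $\fn$ annihilates $[H^j_\fn(T)]_0$ so that any $k$-subspace of it is automatically a $T_\fn$-submodule of $H^j_\fn(T_\fn)$.

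Your attention to the $F$-stability of $V$ flags a genuine imprecision in the proposition. As literally written, hypothesis (iii) only asserts $F(\eta)\in V$ for a single $\eta\notin V$; it does \emph{not} assert $F(V)\subseteq V$. A witness for the failure of $F$-anti-nilpotence must, however, be an $F$-\emph{stable} submodule. The paper's proof supplies this silently when it writes ``so $V$ is an $F$-stable $T_\fn$-submodule,'' which does not follow from (iii) alone. Your repair --- replacing $V$ by the smallest $F$-stable $k$-subspace $\widetilde V$ containing it (precisely, the $k$-span of $\bigcup_{s\geq 0}F^s(V)$, since each $F^s(V)$ by itself is only $k^{p^s}$-linear) --- produces an honest $F$-stable $T_\fn$-submodule, but as you correctly note one must still check $\eta\notin\widetilde V$, and this is not a formal consequence of (iii): if $F$ merely swaps two basis vectors $e_1\leftrightarrow e_2$ of $[H^j_\fm(R)]_0$, then $V=ke_2$, $\eta=e_1$ satisfy (iii), yet every $F$-stable subspace containing $F(\eta)=e_2$ also contains $e_1$. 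The proposition should therefore be read with (iii) strengthened to require $F(V)\subseteq V$, which is what actually happens in both applications: in \cref{prop:f-inj-in-degree-0-p-equals-2}\ref{prop:char2-example-injective-in-deg-0} the space $V=\langle\eta_1,\eta_2\rangle$ is $F$-stable and $F(\eta_3)=t\eta_1\in V$ with $\eta_3\notin V$, and in \cref{prop:f-inj-in-degree-0-p-at-least-3}\ref{prop:char-p>2-example-injective-in-deg-0} one takes $V=\langle\eta_1\rangle$ with $F(\eta_1)=\eta_1$ and $F(\eta_2)=t\eta_1\in V$. Modulo this correction --- which you explicitly identified --- your proof and the paper's coincide, and yours is the more careful of the two.
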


\begin{proof}
    For each $t\in \Z$, we have the identification $[H^i_{\fn}(T)]_{t}=[H^i_{\fm}(R)]_{tn}$. The Frobenius action $[H^i_{\fn}(T)]_{t}\to [H^i_{\fn}(T)]_{pt}$ agrees with the Frobenius action $[H^i_{\fm}(R)]_{tn}\to [H^i_{\fm}(R]_{ptn}$. The assumptions then imply that $T$ (and hence $T_\fn$) is $F$-injective.

    The $k$-subspace $V\subseteq H^j_{\fm}(R)$ is a $k$-subspace of $H^j_{\fn}(T)$ annihilated by the homogeneous maximal ideal $\fn$ of $T$, so $V$ is an $F$-stable $T_{\fn}$-submodule of $H^j_{\fn}(T_\fn)$. Moreover, $\eta\not\in V$ and $F(\eta)\in V$ implies that the induced Frobenius action on $H^j_{\fn}(T_\fn)/V$ is not injective, i.e., $T_\fn$ is not $F$-anti-nilpotent.
\end{proof}

A local ring $(R,\fm)$ of prime characteristic $p>0$ is said to be \emph{$F$-full} provided that the map $\cF_R(H^i_\fm(R))\to H^i_\fm(R)$ is surjective for all $i\geq 0$, where $\cF_R(-)$ denotes the Peskine--Szpiro Frobenius base-change functor. If $R$ is the homomorphic image of a regular local ring of characteristic $p$, say $R\cong S/I$, then $R$ is $F$-full precisely when the natural maps $\Ext^{\dim(S)-i}_S(R,S)\to H^{\dim(S)-i}_I(S)$ are injective for all $i\leq \dim(S)$, which in turn is equivalent to the natural maps $H^i_\fm(S/I^{[p^e]})\to H^i_\fm(R)$ being surjective for all $i\geq 0$ and all $e\geq 0$ (see \cite[Proposition 2.1]{DM24} or \cite[Remark 2.6(1)]{MQ18}). Local rings that are $F$-anti-nilpotent are $F$-full by \cite[Lemma 2.1]{MQ18}. We rely on the following two results to construct $F$-injective rings which are not $F$-full.

\begin{theorem}\cite[Theorem 4.9]{DDM21}\label{F-full}
    Let $(R,\fm)$ be an equidimensional local ring of prime characteristic $p>0$ and Krull dimension $d$. If $H^i_\fm(R)$ is a finite dimensional $R/\fm$-vector space for all $i<d$ and $R$ is $F$-full, then the Frobenius actions $F:H^i_\fm(R)\to H^i_\fm(R)$ are injective for all $i<d$.
\end{theorem}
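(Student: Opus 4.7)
The plan is to exploit the finite-dimensionality of $H^i_\fm(R)$ over $k = R/\fm$ in order to reduce the claim to a purely semilinear-algebra statement over $k$. For each $i < d$, the hypothesis that $H^i_\fm(R)$ is a finite-dimensional $k$-vector space means in particular that $H^i_\fm(R)$ is annihilated by $\fm$. Consequently, the Peskine--Szpiro base change $\cF_R(H^i_\fm(R))$ is generated as an $R$-module by pure tensors $1 \otimes \eta$ with $\eta \in H^i_\fm(R)$, and the natural map $\cF_R(H^i_\fm(R)) \to H^i_\fm(R)$ sends $1 \otimes \eta$ to $F(\eta)$. Its image is therefore the $R$-submodule of $H^i_\fm(R)$ generated by $F(H^i_\fm(R))$, which coincides with the $k$-linear span $k \cdot F(H^i_\fm(R))$ because the target is annihilated by $\fm$. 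The $F$-full hypothesis thus translates, for each $i < d$, into the equality $k \cdot F(H^i_\fm(R)) = H^i_\fm(R)$.

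The argument then reduces to the following semilinear-algebra lemma: if $V$ is a finite-dimensional $k$-vector space and $\phi \colon V \to V$ is a $p$-semilinear endomorphism satisfying $k \cdot \phi(V) = V$, then $\phi$ is injective. I would prove this by contrapositive. Suppose $v_1 \in V$ is nonzero with $\phi(v_1) = 0$, and extend to a $k$-basis $v_1, v_2, \ldots, v_n$ of $V$. By $p$-semilinearity, for any $c_1, \ldots, c_n \in k$,
$$\phi(c_1 v_1 + \cdots + c_n v_n) = c_2^p \phi(v_2) + \cdots + c_n^p \phi(v_n),$$
so $\phi(V)$ is contained in the $k^p$-span of $\phi(v_2), \ldots, \phi(v_n)$. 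It follows that $k \cdot \phi(V)$ has $k$-dimension at most $n - 1 < n = \dim_k V$, contradicting the hypothesis $k \cdot \phi(V) = V$. Applying this lemma with $V = H^i_\fm(R)$ and $\phi = F$ for each $i < d$ yields the desired injectivity.

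I anticipate no serious obstacle in this argument. The one subtlety worth flagging is that the residue field $k$ need not be perfect, so $k^p \subsetneq k$ in general; the dimension count nonetheless goes through because a $k^p$-generating set of a subspace of $V$ still bounds its $k$-dimension from above. Equidimensionality of $R$ plays no explicit role in this particular reduction but is retained as part of the standard framework for results of this type.
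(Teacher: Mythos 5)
Your proof is correct. Note that the paper itself does not reproduce an argument for this statement---it cites \cite[Theorem~4.9]{DDM21}---so a direct comparison with an in-paper proof is not possible; nevertheless, the linearization argument you give is the natural one and is complete. The three steps are all sound: finite-dimensionality of $H^i_\fm(R)$ over $k=R/\fm$ forces $\fm$ to annihilate it; the image of $\cF_R(H^i_\fm(R))\to H^i_\fm(R)$ is then precisely $k\cdot F(H^i_\fm(R))$, so $F$-fullness becomes surjectivity of the linearized Frobenius on each $H^i_\fm(R)$ with $i<d$; and the semilinear-algebra lemma (a nonzero kernel vector $v_1$, extended to a basis, pins $\phi(V)$ inside the $k^p$-span of $\phi(v_2),\dots,\phi(v_n)$, so $k\cdot\phi(V)$ has $k$-dimension at most $n-1$) correctly converts surjectivity of the linearization into injectivity of $\phi$. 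Your remark about imperfect residue fields is the right subtlety to flag, and your dimension count handles it properly: a $k^p$-generating set of size $n-1$ still gives a $k$-spanning set of size $n-1$ for $k\cdot\phi(V)$. You are also right that equidimensionality is not invoked anywhere in this implication; it is carried along as a standing hypothesis of \cite[Theorem~4.9]{DDM21} but does no work in the direction proved here.
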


\begin{theorem}\cite[Proposition 3.8]{DDM21}\label{F-full-ascent}
    Let $(R,\fm)\to (S,\fn)$ be a flat local ring homomorphism between $F$-finite local rings of prime characteristic $p>0$. If $R$ is $F$-full and $S/\fm S$ is Cohen--Macaulay, then $S$ is $F$-full.
\end{theorem}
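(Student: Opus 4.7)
The plan is to reformulate $F$-fullness in a base-change-friendly form and then transfer the surjectivity from $R$ to $S$ via a spectral sequence argument that hinges on the Cohen-Macaulay fiber. Recall that $R$ is $F$-full iff the Peskine--Szpiro maps $\cF^e_R(H^i_\fm(R))\to H^i_\fm(R)$ are surjective for all $i\geq 0$ and $e\geq 1$, and similarly for $S$. Setting $c=\dim(S/\fm S)$, I would first lift a system of parameters $\uy=y_1,\ldots,y_c\in \fn$ of the fiber; flatness of $R\to S$ together with Cohen-Macaulayness of $S/\fm S$ guarantees that $\uy$ is a regular sequence on $S$, and since the same underlying flat map viewed as $F^e_*R\to F^e_*S$ has the same Cohen-Macaulay fiber, also on $F^e_*S$. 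Because $\fn=\sqrt{\fm S+(\uy)}$, there is a composition-of-functors spectral sequence
\[
E_2^{p,q}=H^p_{(\uy)}(H^q_{\fm S}(M))\Rightarrow H^{p+q}_\fn(M)
\]
available for any $S$-module $M$ of interest.

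The next step is to establish the degeneration of this spectral sequence at $p=c$ for $M=S$ and $M=F^e_*S$. Flatness supplies $H^q_{\fm S}(M)\cong H^q_\fm(N)\otimes_R S$ for the appropriate $R$-module $N$ (namely $N=R$ or $F^e_*R$, up to the Frobenius twist), and these $\fm S$-torsion modules are direct limits of tensor products $N_t\otimes_{R/\fm^t}(S/\fm^t S)$ with $N_t$ an $R/\fm^t$-module. Because each $S/\fm^t S$ is flat over $R/\fm^t$ with Cohen-Macaulay fiber $S/\fm S$, $\uy$ remains a regular sequence on every such tensor product, forcing $H^p_{(\uy)}=0$ for $p<c$; the vanishing for $p>c$ follows from the number of generators of $\uy$. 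Passing to direct limits gives $H^p_{(\uy)}(H^q_{\fm S}(M))=0$ for $p\neq c$, and the spectral sequence collapses to $H^{c+q}_\fn(M)\cong H^c_{(\uy)}(H^q_{\fm S}(M))$.

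Finally I would assemble the surjectivity. The $F$-fullness of $R$ produces a surjection at the Peskine--Szpiro level which translates, via flatness, to $H^q_{\fm S}(F^e_*S)\twoheadrightarrow H^q_{\fm S}(S)$. Letting $K$ denote the kernel, the long exact sequence for $(\uy)$-local cohomology reduces the question of $H^c_{(\uy)}$-surjectivity to the vanishing $H^{c+1}_{(\uy)}(K)=0$; since $K$ is itself $\fm S$-torsion, the previous fiber argument supplies this vanishing. Combining with the collapse from the preceding step gives $H^i_\fn(F^e_*S)\twoheadrightarrow H^i_\fn(S)$ for $i\geq c$; in lower degrees the target vanishes by the collapse, so the surjection holds trivially, and $S$ is $F$-full.

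The principal technical obstacle will be the verification that Cohen-Macaulayness of the fiber propagates to $\uy$-regularity of every $\fm$-torsion tensor product $N_t\otimes_{R/\fm^t}(S/\fm^t S)$ --- this is precisely where the fiber hypothesis does the work of producing the spectral sequence degeneration. The rest of the argument combines standard flat-base-change identities for local cohomology with the functorial behavior of the Peskine--Szpiro twist and should then go through mechanically.
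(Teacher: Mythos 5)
The paper does not prove this statement: it is quoted verbatim from \cite[Proposition 3.8]{DDM21}, so there is no in-paper argument to compare you against. Evaluating your sketch on its own terms, it is sound, and the two central ideas --- the Grothendieck spectral sequence $H^p_{(\uy)}(H^q_{\fm S}(-))\Rightarrow H^{p+q}_\fn(-)$ (valid since $\Gamma_{(\uy)}\circ\Gamma_{\fm S}=\Gamma_\fn$ and $\Gamma_{\fm S}$ preserves injectives over a Noetherian ring), and the degeneration at $p=c$ forced by the Cohen--Macaulay fiber --- do the real work. The degeneration argument via filtrations of $\Ext^q_R(R/\fm^t,R)$ by residue fields, tensored along the flat map $R/\fm^t\to S/\fm^t S$, is the right way to handle the non-finitely-generated modules $H^q_{\fm S}(S)$, and then commuting $H^p_{(\uy)}$ with the direct limit closes the gap. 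The transfer of Peskine--Szpiro surjectivity across the flat base change also checks out: if $\eta\otimes s\in H^q_\fm(R)\otimes_R S\cong H^q_{\fm S}(S)$, then $F^e(\eta\otimes s)=F^e(\eta)\otimes s^{p^e}$, so the $S$-span of $F^e\bigl(H^q_{\fm S}(S)\bigr)$ is the image of $\bigl(R\cdot F^e(H^q_\fm(R))\bigr)\otimes_R S$, which is everything by $F$-fullness of $R$. Finally, applying $H^c_{(\uy)}$ and identifying $\cF^e_S(H^{c+q}_\fn(S))\cong H^c_{(\uy)}\bigl(\cF^e_S(H^q_{\fm S}(S))\bigr)$ via Lyubeznik's commutation of $\cF^e$ with local cohomology and the same collapse applied to $\cF^e_S(S)\cong S$ gives the desired surjection; in total degree $<c$ the abutment already vanishes, as you note.

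Two points of hygiene. First, your notation conflates $F^e_*S$ (restriction of scalars) with the Peskine--Szpiro functor $\cF^e_S(S)$: the map in play is $\cF^e_S\bigl(H^q_{\fm S}(S)\bigr)\to H^q_{\fm S}(S)$, not a map out of $H^q_{\fm S}(F^e_*S)$; since $\cF^e_S(S)\cong S$ as an $S$-module the modules coincide, but the maps are not the same object and one should keep them straight. Second, the invocation of ``$H^{c+1}_{(\uy)}(K)=0$ because $K$ is $\fm S$-torsion'' is unnecessary --- $H^{c+1}_{(\uy)}$ vanishes on \emph{every} module simply because $(\uy)$ has $c$ generators. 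Finally, be aware that the route taken in \cite{DDM21} is characteristic-free: they work with cohomological fullness, characterized via surjectivity of $H^i_\fm(A/J)\to H^i_\fm(R)$ for surjections from (quotients of) a fixed regular or complete local ring, and deduce the $F$-full statement as the characteristic-$p$ specialization. Your argument is intrinsic to prime characteristic, which is perfectly adequate for the statement quoted here, but it does not recover the more general result of \emph{op.\ cit.}
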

With the notation of \cref{F-full-ascent}, the strategy for proving \cref{thm:thm-B} is roughly to manufacture $R$ and $S$ to be of dimension three such that $R$ is $F$-injective but such that Frobenius does not act injectively on $H^2_\fn(S)$. The ring $S$ will be non-$F$-full by \cref{F-full}, and hence the same will be true of $R$ by \cref{F-full-ascent}.

\section{The Examples}\label{sec:ex}

 We begin by describing some two-dimensional $\N$-graded hypersurfaces over $\F_p(t)$ that will lend themselves to our main theorems. The characteristic $p=2$ and $p>2$ cases are handled separately, giving the following examples.

\begin{example}\label{ex:main-ex}
    Let $p\in \N$ be a prime integer, $t$ be an indeterminate, and $k=\F_p(t)$. Let $S=k[x,y,z]$ be the polynomial ring in three variables over $k$. Let $R=S/fS$, where $f$ is the polynomial specified below.
    \begin{enumerate}[label=(\alph*)]
        \item If $p=2$ then let $f = x^3+g\in S$ where $g=tyz^7+y^2z^5+y^3z^3+y^4z+z^9$, homogeneous of degree $9$ with respect to the grading obtained by giving $x,y,z$ degrees $3$, $2$, and $1$ respectively.\label{ex:main-ex-char-2}
        \item If $p>2$ then let $f = x^{p-1}-g\in S$ where $g = ty^{2p-1}+y^{p-1}z^{p^2-p}+z^{2p^2-3p+1}$, homogeneous of degree $2p^2-3p+1$ with respect to the grading obtained by giving $x,y,z$ degrees $2p-1$, $p-1$, and $1$ respectively.\label{ex:main-ex-char-3-or-larger}
    \end{enumerate}
\end{example}
Our first goal in this section will be to show with the aid of \cref{theorem:F-inj-in-negative-degree} that the Frobenius action on $H^2_\fm(R)$ is injective in sufficiently negative degrees. We caution the reader that the hypersurfaces $R$ themselves are not $F$-injective as they have positive $a$-invariant in all characteristics (\cref{prop:f-inj-in-degree-0-p-equals-2}\ref{prop:char2-example-a-invariant} and \cref{prop:f-inj-in-degree-0-p-at-least-3}\ref{prop:char-p>2-example-a-invariant}).

\subsection{A characteristic \texorpdfstring{$2$}{2} hypersurface}\label{ex:p-equals-2}
In this subsection we analyze the hypersurface from \cref{ex:main-ex}\ref{ex:main-ex-char-2}.
\begin{proposition}\label{prop:f-inj-in-degree-0-p-equals-2}
    Let $k:=\F_2(t)$ and $S:=k[x,y,z]$ where $x,y,z$ have degrees $3,2$ and $1$ respectively. Let $f=x^3+tyz^7+y^2z^5+y^3z^3+y^4z+z^9$, a homogeneous element of degree $9$, $R = S/fS$, and $\fm$ the homogeneous maximal ideal of $R$. Then the following statements hold.
    \begin{enumerate}[label=(\alph*)]
        \item The $a$-invariant of $R$ is $3$.\label{prop:char2-example-a-invariant}
        \item $R$ is a geometrically normal isolated singularity.\label{prop:char2-example-geometrically-normal}
        \item The Frobenius action $$F:[H^2_\fm(R)]_{-n}\to [H^2_\fm(R)]_{-2n}$$ is injective for all $n>6$.\label{prop:f-inj-in-negative-degree-p-equals-2}
        \item The Frobenius action is injective on $[H^2_\fm(R)]_0$. Moreover, $\left[H^2_{\fm}(R)\right]_0$ admits a $k$-basis $\{\eta_1,\eta_2,\eta_3\}$ with the property that $F(\eta_1)=\eta_2$, $F(\eta_2)=\eta_1$, and $F(\eta_3)=t\eta_1$.\label{prop:char2-example-injective-in-deg-0}
        \item The Frobenius action is not injective on $[H^2_{\fm'}(R\otimes_k k^{1/2})]_0$ where $\fm' = \fm(R\otimes_k k^{1/2})$.\label{prop:char2-example-not-geo-F-inj-deg-0}
    \end{enumerate}
\end{proposition}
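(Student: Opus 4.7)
All five parts can be checked by direct computation with the \v{C}ech basis of $H^2_\fm(R)$ from \eqref{eq:hypersurface-basis} applied to the system of parameters $y,z$, together with the defining relation $x^3 = g$ where $g := tyz^7 + y^3z^3 + z^9 + y^4z$. I would treat them in the order given. Part (a) is immediate from \eqref{eq:a-inv} with $m = 3$, $\deg(x) = 3$, $\deg(y) = 2$, $\deg(z) = 1$, yielding $a(R) = 2 \cdot 3 - 3 = 3$. For part (b), I would compute the partial derivatives in characteristic $2$: $f_x = x^2$, $f_y = z^3(y^2 + tz^4)$, $f_z = y^4 + z^8 + y^3z^2 + tyz^6$. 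The weighted Euler identity $3xf_x + 2yf_y + zf_z = 9f$ collapses in characteristic $2$ to $f = xf_x + zf_z$, so $f \in \Jac(f)$ and $(f,\Jac(f)) = \Jac(f)$. I would then produce explicit $S$-linear combinations showing that suitable powers $y^{k_2}, z^{k_3}$ lie in $\Jac(f)$; combined with $x^2 = f_x$ this proves $\Jac(f)$ is $(x,y,z)$-primary, giving the isolated singularity and, via Serre's criterion, geometric normality.

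For part (c), with such $k_2, k_3$ in hand, \cref{theorem:F-inj-in-negative-degree} applies with $k_1 = 2$, $e_1 = 3, e_2 = 2, e_3 = 1$, and $\deg(f) = 9$; at $n = 8$ the inequality becomes $16 > 2k_2 + k_3 - 12$, i.e., $2k_2 + k_3 < 28$, which the explicit $k_2, k_3$ from (b) will satisfy. For part (d), I would enumerate the degree-zero \v{C}ech basis: the constraints $0 \leq a \leq 2$, $b,c \geq 1$, $3a = 2b + c$ admit exactly the triples $(1,1,1)$, $(2,2,2)$, $(2,1,4)$, so a basis is $\eta_1 := [x/(yz)]$, $\eta_2 := [x^2/(y^2z^2)]$, $\eta_3 := [x^2/(yz^4)]$. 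Direct squaring gives $F(\eta_1) = \eta_2$. For $F(\eta_2) = [x^4/(y^4z^4)]$ and $F(\eta_3) = [x^4/(y^2z^8)]$ one expands $x^4 = x \cdot g$ and distributes; every resulting \v{C}ech class whose denominator is missing a $y$ or a $z$ factor vanishes in $H^2_\fm(R) = R_{yz}/(R_y + R_z)$, leaving $F(\eta_2) = \eta_1$ and $F(\eta_3) = t\eta_1$. Injectivity on $[H^2_\fm(R)]_0$ then reduces, by Frobenius semilinearity, to showing that $a^2\eta_2 + (b^2 + c^2 t)\eta_1 = 0$ with $a,b,c \in \F_2(t)$ forces $a = b = c = 0$; this holds because $t$ is not a square in $\F_2(t)$.

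For part (e), the same \v{C}ech classes form a $k'$-basis of $[H^2_{\fm'}(R \otimes_k k')]_0$ with identical Frobenius formulas, but now $t = (\sqrt{t})^{2}$ is a square in $k' = \F_2(t^{1/2})$. Hence $\sqrt{t}\,\eta_2 + \eta_3$ is a nonzero element whose Frobenius image is $t\eta_1 + t\eta_1 = 0$, witnessing the claimed non-injectivity. The main obstacle is the primarity computation in part (b): producing explicit polynomial expressions certifying that $y^{k_2}, z^{k_3} \in \Jac(f)$ for small $k_2, k_3$ is the only step that requires more than bookkeeping, and it must be delicate enough to keep $2k_2 + k_3$ below the threshold $28$ needed for part (c). Once (b) is in place, (a), (c), (d), and (e) are essentially mechanical.
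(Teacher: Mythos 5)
Your plan matches the paper's proof essentially step for step; parts (a), (c), (d), and (e) are carried out correctly, and the enumeration of the degree-zero basis triples $(1,1,1)$, $(2,2,2)$, $(2,1,4)$, the Frobenius formulas $F(\eta_1)=\eta_2$, $F(\eta_2)=\eta_1$, $F(\eta_3)=t\eta_1$, and the injectivity/non-injectivity arguments using that $t$ is (not) a square are exactly the paper's. The weighted Euler identity observation showing $f = xf_x + zf_z$ in characteristic $2$ is a small bonus that the paper does not state explicitly (it is not strictly needed: once $\Jac(f)$ itself is $(x,y,z)$-primary, so is $(f,\Jac(f))$), but it does no harm.

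The genuine gap is that you never actually prove part (b): you observe that you need $y^{k_2}, z^{k_3} \in \Jac(f)$ with $2k_2 + k_3 < 28$ but stop at the plan of ``producing explicit $S$-linear combinations,'' explicitly flagging this as the step you have not done. This is the computational core of the proposition, and without it parts (b) and (c) are unsupported. The paper carries it out by working modulo $f_y = tz^7 + y^2z^3$: one shows that $y^2z^3\,f_z \equiv z^{11}(tz^4 + t^2)$, and since $\Jac(f)$ is homogeneous with respect to the nonstandard grading while $tz^4 + t^2$ has a degree-zero unit summand, it avoids all associated primes and hence $z^{11}\in\Jac(f)$; then $f_z^2 = t^2y^2z^{12} + y^6z^4 + z^{16} + y^8 \in \Jac(f)$ together with $y^6 z^4 \equiv t^2 z^{13}$ and $z^{11}\in\Jac(f)$ yields $y^8\in\Jac(f)$. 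This gives $(k_1,k_2,k_3)=(2,8,11)$, and indeed $2\cdot 8 + 11 = 27 < 28$, which is exactly the tight bound you identified. You correctly diagnosed where the delicacy lies, but a proof needs these values and the modular computations that certify them.
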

\begin{proof}
As in \eqref{eq:a-inv}, the $a$-invariant of $R$ is $a(R) = 6-(2+1) = 3$ and the Jacobian ideal of $f\in S$ is $$\Jac(f) = (x^2,tz^7+y^2z^3,tyz^6+y^2z^4+y^3z^2+y^4+z^8)=:(x^2,h_1,h_2).$$ We check that $x^2,y^7,$ and $z^{11}\in\Jac(f)$. Clearly $x^2\in \Jac(f)$. Let $\alpha = t^2+t+1\in k$, $\beta = t^3+t^2+1\in k$, and $\zeta=t+1\in k$. One may verify that in the polynomial ring $S$,
\begin{align*}
    y^7 &= h_1\cdot(z^7+\beta \alpha^{-1} yz^5+\alpha^{-1} y^2 z^3+\alpha^{-1}t\zeta y^3z)+h_2\cdot (tz^6+t\zeta\alpha^{-1}yz^4+y^2z^2+y^3)\in\Jac(f),\\
    z^{11}&=h_1\cdot(\zeta\alpha^{-1} z^{4}+\alpha^{-1}yz^2+\alpha^{-1}y^2)+h_2\cdot(\alpha^{-1}z^3)\in\Jac(f).
\end{align*}
    
    Because $x^2,y^7,z^{11}\in\Jac(f)$, the maximal ideal $(x,y,z)$ is the only possible non-smooth point over $k$ and $R$ is geometrically normal. By \cref{theorem:F-inj-in-negative-degree}, the Frobenius action $$F:[H^2_\fm(R)]_{-n}\to [H^2_\fm(R)]_{-2n}$$ is injective if $2n>2(3)+7(2)+11(1)-2(9)=13$, i.e., $n>6$. This completes the proof of \ref{prop:char2-example-a-invariant}-\ref{prop:f-inj-in-negative-degree-p-equals-2}.

    As in \eqref{eq:hypersurface-basis}, a $k$-basis for the degree zero piece of $H^2_\fm(R)$ is given by $$[H^2_\fm(R)]_0=\left\langle \left[\frac{x}{yz}\right], \left[\frac{x^2}{y^2z^2}\right], \left[\frac{x^2}{yz^4}\right] \right\rangle =: \langle \eta_1,\eta_2,\eta_3\rangle.$$

    Observe that $F(\eta_1) = \left[\frac{x^2}{y^2z^2}\right] = \eta_2$. Next observe
    \[
    F(\eta_2) = \left[\frac{x^4}{y^4z^4}\right] = 
    \left[\frac{x(tyz^7+y^2z^5+y^3z^3+y^4z+z^9)}{y^4z^4}\right]=\left[\frac{txz^3}{y^3}+\frac{xz}{y^2} +\frac{x}{yz}+\frac{x}{z^3}+\frac{xz^5}{y^4}\right].
    \]
    The rational functions $\frac{txz^3}{y^3}, \frac{xz}{y^2}, \frac{x}{z^3}$, and $\frac{xz^5}{y^4}$ belong to $\im(R_{yz}\oplus R_{xz}\oplus R_{xy}\to R_{xyz})$. Therefore
    \[
    F(\eta_2) = \left[\frac{x}{yz}\right]=\eta_1.
    \]
    The image of $\eta_3$ under the Frobenius action $F$ is similarly found to be
    \[
    F(\eta_3) = \left[\frac{x^4}{y^2z^8}\right] = 
    \left[\frac{tx}{yz}+\frac{x}{z^3} + \frac{xy}{z^5}  + \frac{xy^2}{z^7} + \frac{xz}{y^2}\right]=\left[\frac{tx}{yz}\right] = t\eta_1.
    \]
    Suppose that $\eta = c_1\eta_1 + c_2\eta_2 + c_3\eta_3\in \left[H^2_{\fm}(R)\right]_0$ is mapped to zero under the Frobenius action. Then 
    \[
    F(\eta) = c_1^2\eta_2 + c_2^2\eta_1 + c_3^2t\eta_1=(c_2^2+tc_3^2)\eta_1 + c_1^2\eta_2=0.
    \]
    Therefore $c_2^2+tc_3^2=c_1^2=0$. Since $\{1,t\}$ forms a basis of $k=\F_2(t)$ over $k^2=\F_2(t^2)$ we find that $c_1=c_2=c_3=0$ so $F$ acts injectively on the degree zero piece of $H^2_{\fm}(R)$. On the other hand, the Frobenius action on the degree zero piece of $H^2_{\fm}(R)\otimes_{k}k^{1/2}\cong H^2_{\fm'}(R\otimes_{k}k^{1/2})$ is no longer injective since
    \[
    F(t^{1/2}\eta_2+\eta_3)=t\eta_1+t\eta_1=0.\qedhere
    \]
\end{proof}

\subsection{Hypersurfaces of characteristic \texorpdfstring{$p>2$}{p>2}}\label{ex:p-at-least-3} In this subsection we analyze the hypersurface from \cref{ex:main-ex}\ref{ex:main-ex-char-3-or-larger}.

\begin{theorem}\label{prop:f-inj-in-degree-0-p-at-least-3}
    Let $p>2$ be a prime number, $k=\F_p(t)$, and $S=k[x,y,z]$ where $x,y,z$ have degrees $2p-1,p-1,$ and $1$ respectively. Let $f=x^{p-1}-(ty^{2p-1}+y^{p-1}z^{p^2-p}+z^{2p^2-3p+1})$, a homogeneous element of degree $2p^2-3p+1$, $R=S/fS$, and $\fm$ the homogeneous maximal ideal of $R$. Then the following statements hold.
    \begin{enumerate}[label=(\alph*)]
        \item The $a$-invariant of $R$ is $2p^2-6p+2$.\label{prop:char-p>2-example-a-invariant}
        \item $R$ is a geometrically normal isolated singularity.\label{prop:char-p>2-example-geometrically-normal}
        \item The Frobenius action $$F:[H^2_\fm(R)]_{-n}\to [H^2_\fm(R)]_{-2n}$$ is injective for all $n>3p-7$.\label{prop:char-p>2-example-inj-neg-degree}
        \item The Frobenius action is injective on $[H^2_\fm(R)]_0$. Moreover, there exists linearly independent elements $\eta_1,\eta_2\in [H^2_{\fm}(R)]_0$ such that $F(\eta_1)=\eta_1$ and $F(\eta_2)=t\eta_1$.\label{prop:char-p>2-example-injective-in-deg-0}
        \item The Frobenius action is not injective on $[H^2_{\fm'}(R\otimes_k k^{1/p})]_0$ where $\fm' = \fm(R\otimes_k k^{1/p})$.\label{prop:char-p>2-example-not-geo-F-inj-deg-0}
    \end{enumerate}
\end{theorem}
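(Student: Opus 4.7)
My plan is to establish (a)--(e) in sequence, following the template of the characteristic $2$ analogue \cref{prop:f-inj-in-degree-0-p-equals-2}. For (a), I apply the $a$-invariant formula \eqref{eq:a-inv} with $m = p-1$, obtaining $a(R) = (p-2)(2p-1) - (p-1) - 1 = 2p^2 - 6p + 2$. For (b), I compute $\Jac(f)$ in characteristic $p$, observing that $p(p-1) \equiv 0 \pmod{p}$ annihilates $\partial/\partial z$ of the middle monomial $y^{p-1}z^{p^2-p}$ in $g$. Consequently $\Jac(f)$ contains $x^{p-2}$ and $z^{2p^2-3p}$ directly from $f_x$ and $f_z$. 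To obtain a pure power of $y$, I first multiply $f_y = ty^{2p-2} + y^{p-2}z^{p^2-p}$ by $z^{p^2-p}$: since $z^{2p^2-2p} = z^p \cdot z^{2p^2-3p} \in \Jac(f)$, I deduce $y^{2p-2}z^{p^2-p} \in \Jac(f)$; then $y^p f_y = ty^{3p-2} + y^{2p-2}z^{p^2-p}$ forces $y^{3p-2} \in \Jac(f)$. Combined, this shows $V(\Jac(f) + (f))$ has finite support in $\Spec(S)$, so $R$ is geometrically normal.

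For (c), I apply \cref{theorem:F-inj-in-negative-degree} with $(k_1, k_2, k_3) = (p-2, 3p-2, 2p^2-3p)$: a direct computation yields $\sum_i k_i e_i - 2\deg(f) = 3p^2 - 7p + 2$, so the required inequality reduces to $n > 3p - 7$. For the explicit Frobenius computation in (d), I set $\eta_1 = \left[\frac{x}{yz^p}\right]$ and $\eta_2 = \left[\frac{x}{y^2 z}\right]$, both of degree zero in $\left[H^2_\fm(R)\right]_0$. Using the relation $x^p = x \cdot x^{p-1} = xg$ in $R$, I compute $F(\eta_1) = \left[\frac{xg}{y^p z^{p^2}}\right]$ and distribute over the three summands of $g$: the first yields a term with no $y$ in the denominator, and the third a term with positive $z$-exponent in the numerator (for $p \geq 3$)---both vanish in local cohomology---while the middle summand simplifies to $\left[\frac{x}{yz^p}\right]$, giving $F(\eta_1) = \eta_1$. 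The analogous expansion for $\eta_2$ retains only the first summand, yielding $F(\eta_2) = \left[\frac{tx}{yz^p}\right] = t\eta_1$.

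The main obstacle will be proving injectivity of $F$ on all of $\left[H^2_\fm(R)\right]_0$, which has $k$-dimension $(p-1)(p-2)$. I plan to observe that $F$ preserves the decomposition $\left[H^2_\fm(R)\right]_0 = \bigoplus_{a=1}^{p-2} V_a$ indexed by the $x$-exponent, and to analyze each sector $V_a$ separately. Expanding $F\left(\left[\frac{x^a}{y^b z^c}\right]\right) = \left[\frac{x^a g^a}{y^{bp} z^{cp}}\right]$ via the multinomial expansion of $g^a$, each surviving contribution $\left[\frac{x^a}{y^{b'} z^{c'}}\right]$ carries a scalar coefficient $\binom{a}{i,j,l} t^i$ with $0 \leq i \leq a \leq p-2$; when $b \leq a$, the ``diagonal'' term corresponds to $(i,j,l) = (0, b, a-b)$ with the nonzero weight $\binom{a}{b} \in \F_p^{\times}$. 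Setting $F(\eta) = 0$ for $\eta = \sum_b c_{a,b} e_{a,b} \in V_a$ produces equations in $k$ whose coefficients decompose uniquely in the $k^p$-basis $\{1, t, t^2, \ldots, t^{p-1}\}$ of $k = \F_p(t)$, and by $k^p$-linear independence combined with the nonvanishing multinomial diagonal, I expect to conclude $c_{a,b} = 0$ for every $(a,b)$. Part (e) then follows directly from (d): over $k^{1/p} = \F_p(t^{1/p})$, the element $\eta_2 - t^{1/p}\eta_1$ is nonzero in $\left[H^2_{\fm'}(R \otimes_k k^{1/p})\right]_0$ but satisfies $F(\eta_2 - t^{1/p}\eta_1) = t\eta_1 - t\eta_1 = 0$.
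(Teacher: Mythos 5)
Your parts (a), (b), (c), the explicit computation of $\eta_1,\eta_2$ in (d), and part (e) are correct and follow the same route as the paper; the minor variations in how you extract $y^{3p-2}\in\Jac(f)$ for (b) are equivalent to the paper's manipulation.

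The gap is in the injectivity claim of (d). You correctly observe that $F$ preserves the $x$-exponent and that $\{1,t,\ldots,t^{p-1}\}$ is a $k^p$-basis of $k$, but your ``nonvanishing multinomial diagonal'' argument only addresses the basis elements $\eta_{a,b}$ with $b\leq a$: the partition $(i,j,l)=(0,b,a-b)$ of $a$ requires $b\leq a$, whereas $b$ ranges over $1\leq b\leq 2a$. For $b>a$ there is no contribution from $F(\eta_{a,b})$ back to $\eta_{a,b}$ (indeed, the unique solution to $pb-(2p-1)i-(p-1)j=b$ forces $i=0$, $j=b$, which needs $b\leq a$), so the $(b',i)=(b,0)$ ``diagonal'' entry is simply zero. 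A naive diagonal argument therefore cannot close the case $b>a$, and your phrase ``I expect to conclude $c_{a,b}=0$'' is precisely the point where the proof is missing. The paper fills this hole by reading off the coefficients of the specific monomials $y^{2pn-s}z^{(p-1)((2p-1)n-p+s)}$ for $s=1,\ldots,a$ (equivalently the output index $b'=s\leq a$ rather than $b'=b$), observing that for fixed $s$ the unique multinomial partition sending $\eta_{a,b}$ to $\eta_{a,s}$ is $(i,j,l)=(b-s,2s-b,a-s)$ valid exactly for $s\leq b\leq 2s$, with coefficient $\binom{a}{a-s}\binom{s}{b-s}t^{b-s}$. This yields, for each $s$, an equation whose $t$-powers $t^0,\ldots,t^s$ lie within the $k^p$-basis and whose $\F_p$-coefficients are nonzero by Lucas (since $s\leq a\leq p-2$); $k^p$-linear independence then forces $c_s=\cdots=c_{2s}=0$, and letting $s$ run from $1$ to $a$ covers all $b\in\{1,\ldots,2a\}$. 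You should carry out this monomial bookkeeping explicitly to complete (d); the key ideas you name are all present, but the choice of which output slots $b'$ to examine and the verification of uniqueness and nonvanishing of the multinomial coefficients are essential and not supplied.
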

\begin{proof}
    As in \eqref{eq:a-inv}, the $a$-invariant of $R$ is $a(R)=(2p-1)(p-2)-(p-1+1)=2p^2-6p+2$. The Jacobian ideal of $f$ is 
    \[
    \Jac(f)=(x^{p-2},ty^{2p-2}+y^{p-2}z^{p^2-p},z^{2p^2-3p})S.
    \]
    Observe that
    \[
    ty^{2p-2}z^{p^2-2p} = z^{p^2-2p}(ty^{2p-2}+y^{p-2}z^{p^2-p}) - y^{p-2}(z^{2p^2-3p})\in\Jac(f).
    \]
    Therefore 
    \[
    y^{3p-2} = t^{-1}y^{p}(ty^{2p-2}+y^{p-2}z^{p^2-p}) - t^{-2}z^p(ty^{2p-2}z^{p^2-2p})\in \Jac(f).
    \]
    By the Jacobian criterion, the maximal ideal $(x,y,z)$ is the only non-smooth point of $R$, therefore $R$ has a geometrically normal isolated singularity at $\fm$. By \cref{theorem:F-inj-in-negative-degree}, the Frobenius action $F:[H^2_{\fm}(R)]_{-n}\to [H^2_{\fm}(R)]_{-pn}$ is injective if
    \begin{align*}
    pn &> (p-2)(2p-1) + (3p-2)(p-1) + (2p^2-3p)(1) - 2(2p^2-3p+1)\\
    & = 3p^2-7p+2.
    \end{align*}
    Equivalently, if $n>3p-7$ then the Frobenius action $F:[H^2_{\fm}(R)]_{-n}\to [H^2_{\fm}(R)]_{-pn}$ is injective. This completes the proof of \ref{prop:char-p>2-example-a-invariant}-\ref{prop:char-p>2-example-inj-neg-degree}.

    As in \eqref{eq:hypersurface-basis}, a $k$-basis for the degree zero piece of $H^2_\fm(R)$ is given by
\begin{align}
    [H^2_\fm(R)]_0 = \left\langle \eta_{n,m}:=\left[\frac{x^n}{y^m z^{(2p-1)n - (p-1)m}}\right]\middle|\begin{array}{l}
    1\leq n \leq p-2 \\
    1\leq m\leq 2n
  \end{array}\right\rangle.\label{eq:basis}
\end{align}
Note that $R$ admits a $(\Z/(2p^2-3p+1)\Z)^2$-grading by setting $\deg(x) = (2p-1,0)$, $\deg(y) = (0,p-1)$ and $\deg(z) = (0,1)$. The degree $n$ of the $x$-exponent of a \v{C}ech class from \eqref{eq:basis} is unchanged by Frobenius. Indeed, under this multigrading note that $$\deg(\eta_{n,m}) = ((2p-1)n,-(2p-1)n)$$ hence $\deg(F(\eta_{n,m})) = (p(2p-1)n,-p(2p-1)n)$. Since $\gcd(p,2p-1)=1$ we see that the Frobenius map fixes the value $n$. We therefore fix $1\leq n\leq p-2$, and for each $1\leq m\leq 2n$ we denote $\eta_m=\left[\frac{x^n}{y^m z^{(2p-1)n - (p-1)m}}\right]$ as in the basis \eqref{eq:basis}. Suppose there exists $c_1,\ldots, c_{2n}\in k$ such that the element 
    \begin{align}
         c_1\eta_1+\cdots+c_{2n}\eta_{2n} = & \left[\frac{\sum\limits_{m=1}^{2n} c_m y^{2n-m} z^{(p-1)(m-1)}x^n}{y^{2n} z^{(2p-1)n - (p-1)}}\right] \nonumber
    \end{align}
    is mapped to zero under the Frobenius action. That is, if 
\begin{align}
\left[\frac{\sum\limits_{m=1}^{2n} c_m^p y^{p(2n-m)} z^{p(p-1)(m-1)}x^{pn}}{y^{2pn} z^{p((2p-1)n - (p-1))}}\right]=0,\label{eq:p-at-least-3-degree-0-1}
\end{align}
which occurs if and only if the element $\sum\limits_{m=1}^{2n} c_m^p y^{p(2n-m)} z^{p(p-1)(m-1)}x^{pn}$ belongs to the ideal $$I:=(y^{2pn},z^{p((2p-1)n - (p-1))})R.$$ Since $x^{pn} = x^n (x^{p-1})^n = x^n g^n$ in $R$, the equality \eqref{eq:p-at-least-3-degree-0-1} is equivalent to the containment

\begin{align}
    &\left(\sum\limits_{m=1}^{2n} c_m^p y^{p(2n-m)} z^{p(p-1)(m-1)}\right)(ty^{2p-1} + y^{p-1}z^{p^2-p} + z^{2p^2-3p+1})^n\in I.\label{q:p-at-least-3-degree-0-2}
\end{align}
In this case, one may check that for each $1\leq s\leq n$ the coefficient of the monomial $$y^{2pn-s}z^{(p-1)((2p-1)n-p+s)}$$ in the expansion of the left hand side of \eqref{q:p-at-least-3-degree-0-2} is zero. That is,
\begin{align}
\sum\limits_{i=s}^{2s}\binom{n}{n-s}\binom{s}{i-s} c_i^p t^{i-s} = 0
\end{align}
in $k$. Observe that $p$ does not divide either of the above binomial coefficients since $s\leq n\leq p-2$. Note also that $c_m^p\in k^p$ for each $m$, and $s\leq p-2\leq p = [k^p(t):k^p]$. This then implies that $c_m = 0$ for all $m$ so $F$ is indeed injective on $[H^2_\fm(R)]_0$. This completes the proof of \ref{prop:char-p>2-example-injective-in-deg-0}.

Let $\eta_1 = \left[\frac{x}{yz^p}\right]$ and $\eta_2=\left[\frac{x}{y^2z}\right]$. Then $\eta_1,\eta_2\in \left[H^2_{\fm}(R)\right]_0$,
\[
F(\eta_1) = \left[\frac{x^p}{y^pz^{p^2}}\right] = \left[\frac{x(ty^{2p-1}+y^{p-1}z^{p^2-p}+z^{2p^2-3p+1})}{y^pz^{p^2}}\right] = \left[\frac{x}{yz^p}\right] =\eta_1,
\]
and
\[
F(\eta_2) = \left[\frac{x^p}{y^{2p}z^p}\right] = \left[\frac{x(ty^{2p-1}+y^{p-1}z^{p^2-p}+z^{2p^2-3p+1})}{y^{2p}z^{p}}\right] = \left[\frac{tx}{yz^p}\right] =t\eta_1.
\]
We obtain \ref{prop:char-p>2-example-not-geo-F-inj-deg-0} as a consequence since
\[
F(t^{1/p}\eta_1+(p-1)\eta_2) = pt\eta_1=0
\]
and the Frobenius action on the degree zero piece of $H^2_{\fm}(R)\otimes_kk^{1/p}\cong H^2_{\fm'}(R\otimes_kk^{1/p})$ is not injective.
\end{proof}

\subsection{Proofs of Main Theorems}
The following two theorems comprise \cref{thm:thm-A,thm:thm-B}.
\begin{theorem}\label{thm:f-inj-not-f-anti}
    Let $R$ be the $\N$-graded ring described by \cref{ex:main-ex} with homogeneous maximal ideal $\fm$. If $p=2$ let $n>6$. If $p>2$ let $n>2p^2-6p+2$. Let $T:=R^{(n)}$ be the $n$\ts{th} Veronese subalgebra of $R$ and $\fn$ the homogeneous maximal ideal of $T$. Then the following statements hold.
\begin{enumerate}[label=(\alph*)]
    \item $T$ (and hence $T_\fn$) is $F$-injective and geometrically normal.\label{thm:f-inj-not-f-anti-1}
    \item $T_\fn$ is not $F$-anti-nilpotent.\label{thm:f-inj-not-f-anti-2}
    \item $T\otimes_k k^{1/p}$ is not $F$-injective.\label{thm:f-inj-not-f-anti-3}
    \item The enveloping algebra $T\otimes_k T$ and the Segre product $T\# T$ are not $F$-injective.\label{thm:f-inj-not-f-anti-4}
\end{enumerate}
\end{theorem}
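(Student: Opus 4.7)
The plan is to deduce all four statements from the explicit Frobenius calculations on $[H^2_\fm(R)]_0$ in Propositions \ref{prop:f-inj-in-degree-0-p-equals-2} and \ref{prop:f-inj-in-degree-0-p-at-least-3}, together with the Veronese, base change, and K\"unneth tools from Section 2. Parts (a) and (b) will follow by verifying the hypotheses of \cref{prop: Finding F-inj not anti-nilpotent}. Because $R$ is a two-dimensional Cohen--Macaulay hypersurface only $i = 2$ contributes; hypothesis (i) is precisely part (c) of the relevant proposition (with threshold $n_0 = 8$ when $p=2$ and $n_0 = 3p-6$ when $p>2$), hypothesis (ii) is part (d), and for hypothesis (iii) I would take $V = k \cdot \eta_1$ together with $\eta = \eta_3$ in characteristic $2$ and $\eta = \eta_2$ in odd characteristic, so that $F(\eta) = t\eta_1 \in V$ while $\eta \notin V$. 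The lower bounds on $n$ in the statement exceed both $a(R)$ and $n_0$, so \cref{prop: Finding F-inj not anti-nilpotent} immediately yields $F$-injectivity of $T$ (hence of $T_\fn$) together with the failure of $F$-anti-nilpotence of $T_\fn$. For geometric normality of $T$, I would observe that for any finite field extension $k \hookrightarrow k'$ we have $T \otimes_k k' = (R \otimes_k k')^{(n)}$; since $R$ is geometrically normal by part (b) of the relevant proposition, $R \otimes_k k'$ is normal, and Veronese subrings of $\N$-graded normal $k$-algebras are themselves normal.

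Part (c) admits the same reduction: the Veronese commutes with base change, so $T \otimes_k k^{1/p} = (R \otimes_k k^{1/p})^{(n)}$, and the degree-preserving Veronese identification yields $[H^2_{\fn'}(T \otimes_k k^{1/p})]_0 = [H^2_{\fm'}(R \otimes_k k^{1/p})]_0$ together with its Frobenius action. Parts (e) of the two propositions exhibit a nonzero element of this common degree-zero space killed by Frobenius, so $T \otimes_k k^{1/p}$ fails to be $F$-injective.

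For part (d), the strategy is to exhibit a nonzero kernel class of Frobenius in the top local cohomology of $R \otimes_k R$ in bidegree $(0,0)$ and transfer it to $T \otimes_k T$ via the bigraded Veronese identification $T \otimes_k T = (R \otimes_k R)^{(n,n)}$; an analogous transfer via $T \# T = (R \# R)^{(n)}$ together with \eqref{eq:Kunneth} will handle the Segre product. Using the basis \eqref{eq:enveloping} and the Frobenius formulas of parts (d), I would set
\[
\xi := \begin{cases} \eta_2 \otimes \eta_3' + \eta_3 \otimes \eta_2' & \text{if } p = 2, \\ \eta_1 \otimes \eta_2' - \eta_2 \otimes \eta_1' & \text{if } p > 2. \end{cases}
\]
A direct check shows that both summands of $F(\xi)$ equal $t(\eta_1 \otimes \eta_1')$ and therefore cancel, so $F(\xi) = 0$, while $\xi$ is a nontrivial combination of distinct basis elements of $[H^2_\fm(R)]_0 \otimes_k [H^2_{\fm'}(R')]_0$. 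For the Segre product, \eqref{eq:Kunneth} places $H^2_\fm(R) \# H^2_{\fm'}(R')$ as a Frobenius-stable summand of $H^3_{\fm_R \# \fm_R}(R \# R)$, and the same $\xi$, regarded as an element of this summand in degree zero, again produces a nonzero kernel class that persists after passage to $T \# T$. The main delicacy will be confirming that the Frobenius action is compatible with both the bigraded Veronese isomorphism for $T \otimes_k T$ and the K\"unneth decomposition for $T \# T$, so that $\xi$ indeed represents a genuine nonzero cohomology class killed by Frobenius in each Veronese.
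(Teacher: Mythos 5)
Your proposal is correct and tracks the paper's proof almost exactly: parts (a)--(b) are obtained by applying \cref{prop: Finding F-inj not anti-nilpotent} to the data in \cref{prop:f-inj-in-degree-0-p-equals-2}\ref{prop:f-inj-in-negative-degree-p-equals-2}, \ref{prop:char2-example-injective-in-deg-0} (resp.\ \cref{prop:f-inj-in-degree-0-p-at-least-3}\ref{prop:char-p>2-example-inj-neg-degree}, \ref{prop:char-p>2-example-injective-in-deg-0}), geometric normality of $T$ comes from geometric normality of $R$ plus the stability of normality under Veronese (the paper cites \cite[Prop.\ 6.15(b)]{HR74} directly rather than spelling out the base-change-commutes-with-Veronese step as you do, but the content is the same), part (c) is the degree-zero Veronese identification applied to $R\otimes_k k^{1/p}$, and for part (d) your class $\xi$ is exactly the paper's $\gamma_1\otimes\gamma_2' - \gamma_2\otimes\gamma_1'$ for the choices $\gamma_1=\eta_2,\gamma_2=\eta_3$ when $p=2$ (where the sign is immaterial) and $\gamma_1=\eta_1,\gamma_2=\eta_2$ when $p>2$. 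One small remark: where the paper writes $\left[H^4_{\fn\#\fn}(T\#T)\right]_0$, the cohomological degree should be $3$ (since $\dim(T\#T)=3$), as you correctly have it; the common content is that $\left[H^2_\fn(T)\right]_0\otimes_k\left[H^2_{\fn'}(T')\right]_0$ sits Frobenius-equivariantly inside both $\left[H^3_{\fn\#\fn}(T\#T)\right]_0$ and $\left[H^4_\fM(T\otimes_k T)\right]_{(0,0)}\subseteq\left[H^4_\fM(R\otimes_k R)\right]_{(0,0)}$, so the vanishing of $F(\xi)$ can be checked in the enveloping algebra of $R$ and then transported, which is precisely the ``delicacy'' you flag and that the paper handles via this chain of identifications.
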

\begin{proof}
The ring $R$ is geometrically normal by \cref{prop:f-inj-in-degree-0-p-equals-2} and \cref{prop:f-inj-in-degree-0-p-at-least-3}. Therefore the Veronese subalgebra $T$ is geometrically normal by \cite[Proposition 6.15(b)]{HR74}. The ring $T_{\fn}$ is not $F$-anti-nilpotent by \cref{prop: Finding F-inj not anti-nilpotent}. The Frobenius action on $[H^2_{\fm}(R\otimes_{k}k^{1/p})]_{0}$ is not $F$-injective by \cref{prop:f-inj-in-degree-0-p-equals-2} and \cref{prop:f-inj-in-degree-0-p-at-least-3}. The Frobenius action on $[H^2_{\fn}(T\otimes_{k}k^{1/p})]_{0}$ agrees with that of $[H^2_{\fm}(R\otimes_{k}k^{1/p})]_{0}$, therefore $T\otimes_{k}k^{1/p}$ is not $F$-injective. This completes the proof of \ref{thm:f-inj-not-f-anti-1}-\ref{thm:f-inj-not-f-anti-3}.

For \ref{thm:f-inj-not-f-anti-4}, we adopt notation used in \cref{subsection-hypersurface}. The ring $R$ is presented as $k[x,y,z]/(f)$. Let $f'$ be the polynomial in $k[u,v,w]$ obtained by replacing $x,y,z$ with $u,v,w$ respectively and set $R'=k[u,v,w]/(f')$ with homogeneous maximal ideal $\fm'$. Consider $R\otimes_k R'$ with its natural $\N^2$-grading and let $\mathfrak{M}$ be the homogeneous maximal ideal. If $\eta\in H^2_{\fm}(R)$ then $\eta'$ denotes the corresponding element of $H^2_{\fm'}(R')$.

Whether $R$ has characteristic $p=2$ or $p>2$, the vector space $\left[H^2_{\fm}(T)\right]_0=\left[H^2_{\fm}(R)\right]_0$ admits linearly independent elements $\gamma_1$ and $\gamma_2$ so that $tF(\gamma_1)=F(\gamma_2)$, see \cref{prop:f-inj-in-degree-0-p-equals-2}\ref{prop:char2-example-injective-in-deg-0} and \cref{prop:f-inj-in-degree-0-p-at-least-3}\ref{prop:char-p>2-example-injective-in-deg-0}. Then $\gamma_1\otimes \gamma_2'$ and $\gamma_2\otimes \gamma_1'$ are distinct basis elements of 
\[
\left[H^4_{\fn\#\fn}(T\# T)\right]_0\subseteq \left[H^4_{\mathfrak{M}
}(T\otimes_{k} T)\right]_{(0,0)}\subseteq \left[H^4_{\mathfrak{M}}(R\otimes_{k} R)\right]_{(0,0)}
\]
so that
\[
F(\gamma_1\otimes \gamma_2' - \gamma_2\otimes\gamma_1') = F(\gamma_1)\otimes tF(\gamma_1') - tF(\gamma_1)\otimes F(\gamma_1')=t\left(F(\gamma_1)\otimes F(\gamma_1') - F(\gamma_1)\otimes F(\gamma_1')\right)=0.
\]
Therefore $T\otimes_k T$ and $T\#T$ are not $F$-injective.\footnote{This may also be checked directly in terms of the basis \eqref{eq:enveloping} using $\left[\frac{x^2u^2(z^2v+yw^2)}{y^2z^4v^2w^4}\right]$ when $p=2$ and $\left[\frac{xu(vz^{p-1}-yw^{p-1})}{y^2z^pv^2w^p}\right]$ otherwise.} 
\end{proof}

Finally, we show that the Segre product of the polynomial ring in two variables with the ring constructed above in \cref{thm:f-inj-not-f-anti} produces an $F$-injective, geometrically normal domain which is not $F$-full. Note that such a ring is necessarily non-Cohen--Macaulay by \cite[Remark 2.4(3)]{MQ18}.

\begin{theorem}\label{thm:not-f-full}
    Let $p>0$ be a prime and $R$ the $\N$-graded ring described by \cref{ex:main-ex} with homogeneous maximal ideal $\fm$. If $p=2$ let $n>6$ and $n\equiv 0\pmod{6}$ and if $p>2$ let $n>2p^2-6p+2$ and $n\equiv 0\pmod{(2p-1)(p-1)}$. Let $T:=R^{(n)}$ be the $n$\ts{th} Veronese subalgebra of $R$ and $\fn$ the homogeneous maximal ideal of $T$. Let $u,v$ be indeterminates of degree $1$ and $A:=T\#k[u,v]$ the Segre product with homogeneous maximal ideal $\fm_A=\fn\#(u,v)$. Then the localization $A_{\fm_A}$ is $F$-injective, geometrically normal, but not $F$-full.
\end{theorem}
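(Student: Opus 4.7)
The plan is to analyze $H^i_{\fm_A}(A)$ via the Künneth formula \eqref{eq:Kunneth}, derive $F$-injectivity and geometric normality from the calculation, and then demonstrate non-$F$-fullness via a purely inseparable flat base change, invoking \cref{F-full} and \cref{F-full-ascent}. Since $T$ is Cohen--Macaulay (Veronese of the CM hypersurface $R$) and $k[u,v]$ is CM of dimension two, both have depth at least two, so the Künneth formula applies. The hypothesis $n > a(R)$ forces $a(T) = 0$, so $[H^2_\fn(T)]_m = [H^2_\fm(R)]_{mn}$ vanishes for $m > 0$; combined with $[H^2_{(u,v)}(k[u,v])]_m = 0$ for $m > -2$, the Künneth decomposition collapses to $H^0_{\fm_A}(A) = H^1_{\fm_A}(A) = 0$, to $H^2_{\fm_A}(A) \cong [H^2_\fm(R)]_0$ concentrated in degree $0$ (hence a finite-dimensional $k$-vector space), and to $H^3_{\fm_A}(A) \cong H^2_\fn(T) \# H^2_{(u,v)}(k[u,v])$ supported in degrees $m \leq -2$. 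In particular $A_{\fm_A}$ has dimension three and depth two.

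For $F$-injectivity of $A_{\fm_A}$, Frobenius on $H^2_{\fm_A}(A)$ coincides with Frobenius on $[H^2_\fm(R)]_0$, which is injective by \cref{prop:f-inj-in-degree-0-p-equals-2}\ref{prop:char2-example-injective-in-deg-0} or \cref{prop:f-inj-in-degree-0-p-at-least-3}\ref{prop:char-p>2-example-injective-in-deg-0}. On each graded piece of $H^3_{\fm_A}(A)$ in degree $m \leq -2$, the Frobenius factors through the tensor $F_T \otimes F_{k[u,v]}$, where $F_{k[u,v]}$ sends the standard monomial basis $\{[1/(u^a v^b)]\}$ of $[H^2_{(u,v)}(k[u,v])]_m$ injectively to a subset of the standard basis of $[H^2_{(u,v)}(k[u,v])]_{pm}$ and therefore to a $k$-linearly independent image. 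Combined with injectivity of $F_T$ on $[H^2_\fn(T)]_m = [H^2_\fm(R)]_{mn}$ by \cref{prop:f-inj-in-degree-0-p-equals-2}\ref{prop:f-inj-in-negative-degree-p-equals-2} or \cref{prop:f-inj-in-degree-0-p-at-least-3}\ref{prop:char-p>2-example-inj-neg-degree} (since $|mn| \geq 2n$ exceeds the threshold given by the divisibility choice of $n$), the tensor is injective: one projects coefficient-wise along the $k$-linearly independent basis $\{F_{k[u,v]}(\xi_j)\}$ and applies injectivity of $F_T$ on each slice. Geometric normality is inherited from $T$ being geometrically normal (\cref{thm:f-inj-not-f-anti}\ref{thm:f-inj-not-f-anti-1}): Segre products commute with base change, and for any finite extension $L/k$, the ring $(T \otimes_k L) \# L[u,v]$ is the invariant subring of the normal domain $(T \otimes_k L)[u,v]$ under the natural torus action on $u,v$, hence normal.

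The crux is non-$F$-fullness. Set $B := A_{\fm_A} \otimes_k k^{1/p}$, a finite flat local extension of $A_{\fm_A}$ whose closed fiber is the field $k^{1/p}$ and is therefore Cohen--Macaulay. Flat base change yields $H^i_{\fm_B}(B) \cong H^i_{\fm_A}(A_{\fm_A}) \otimes_k k^{1/p}$ for every $i$, so every $H^i_{\fm_B}(B)$ with $i < 3$ is a finite-dimensional $k^{1/p}$-vector space, and $H^2_{\fm_B}(B) \cong [H^2_\fm(R)]_0 \otimes_k k^{1/p} \cong [H^2_{\fm'}(R \otimes_k k^{1/p})]_0$ is precisely the module on which Frobenius fails to be injective by \cref{prop:f-inj-in-degree-0-p-equals-2}\ref{prop:char2-example-not-geo-F-inj-deg-0} or \cref{prop:f-inj-in-degree-0-p-at-least-3}\ref{prop:char-p>2-example-not-geo-F-inj-deg-0}. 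Applying \cref{F-full} to $B$ shows $B$ is not $F$-full, and \cref{F-full-ascent} then propagates this failure back to $A_{\fm_A}$. The pivotal technical point is that the Veronese parameter $n$ is chosen large enough that $H^2_{\fm_A}(A_{\fm_A})$ becomes a finite-length module, converting the loss of $F$-injectivity after purely inseparable base change into a genuine obstruction to $F$-fullness.
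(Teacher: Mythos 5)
Your proof is correct, and the non-$F$-fullness argument (base change to $k^{1/p}$, flat base change for local cohomology, then \cref{F-full} followed by \cref{F-full-ascent}) is essentially the paper's. Where you genuinely diverge is in the verification of $F$-injectivity and geometric normality of $A_{\fm_A}$. For $F$-injectivity, you compute Frobenius directly on all the graded pieces of $H^2_{\fm_A}(A)$ and $H^3_{\fm_A}(A)$ coming out of the K\"{u}nneth decomposition, whereas the paper never touches $H^3$ at all: it exhibits $A$ as a pure subring of $T\otimes_k k[u,v]$, deduces $F$-injectivity of the latter from flat ascent with regular fibers \cite[Theorem A]{DM24}, and then descends along the pure inclusion via \cite[Theorem 3.8]{DM24}. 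Your hands-on route through $H^3_{\fm_A}(A)\cong H^2_\fn(T)\#H^2_{(u,v)}(k[u,v])$ does work, but it is worth flagging that the ``projection'' step depends on the special feature that $F_{k[u,v]}$ carries the monomial basis of $[H^2_{(u,v)}(k[u,v])]_m$ injectively onto a $k$-linearly independent subset of the monomial basis in degree $pm$; over the imperfect field $k=\F_p(t)$, a tensor product of two injective $p$-linear maps need not be injective --- in fact the paper exploits exactly this failure in \cref{thm:f-inj-not-f-anti}\ref{thm:f-inj-not-f-anti-4} to show the enveloping algebra $T\otimes_k T$ and $T\#T$ are not $F$-injective. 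So the pure-descent route is both shorter and avoids a point that requires some care to state correctly. For geometric normality, you base-change Segre products and realize $(T\otimes_k L)\#L[u,v]$ as a $\mathbb{G}_m$-invariant direct summand of the normal domain $(T\otimes_k L)[u,v]$; the paper instead embeds $A$ as a pure graded subring of the geometrically normal hypersurface $k[x,y,z,u,v]/(f)$ and invokes \cite[Proposition 6.15(b)]{HR74}. Both are valid; the paper's is more immediate given what is already established about $R$, yours is more self-contained but leans on the Reynolds-operator argument (a direct summand of a normal Noetherian domain is normal) which you invoke somewhat tersely.
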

\begin{proof}

The Veronese algebra $T=R^{(n)}$ is standard graded since $n$ is chosen to be divisible by the least common multiple of the weights of $x,y$ and $z$. The map
\begin{align*}
    A=T\#k[u,v]\hookrightarrow R\#k[u,v]\hookrightarrow R\otimes_k k[u,v]\cong \frac{k[x,y,z,u,v]}{(f)}
\end{align*}
is pure since it's a composition of pure homomorphisms, and the target is geometrically normal by \cref{prop:f-inj-in-degree-0-p-equals-2}\ref{prop:char2-example-geometrically-normal} and \cref{prop:f-inj-in-degree-0-p-at-least-3}\ref{prop:char-p>2-example-geometrically-normal}. It follows that $A$ is geometrically normal by \cite[Proposition 6.15(b)]{HR74}. Note that $T$ and $T_{\fn}$ are $F$-injective by \cref{thm:f-inj-not-f-anti}\ref{thm:f-inj-not-f-anti-1}. Therefore $T_{\fn}\otimes_{k}k[u,v]_{(u,v)}$ and $T\otimes_{k}k[u,v]=T[u,v]$ are $F$-injective by \cite[Theorem A]{DM24}. Note also that $\dim(A)=3$ and by the K\"{u}nneth formula for local cohomology \eqref{eq:Kunneth}, the only nonzero local cohomology modules of $A$ are
\begin{align}    
    H^3_{\fm_A}(A) \cong & H^2_{\fn}(T) \# H^2_{(u,v)}(k[u,v]),\label{thm:not-f-full-1}\\
    H^2_{\fm_A}(A) \cong & (T\# H^2_{(u,v)}(k[u,v]) \oplus (k[u,v]\# H^2_{\fn}(T))\nonumber\\
        \cong & [H^2_{(u,v)}(k[u,v])]_0 \oplus [H^2_{\fn}(T)]_0\nonumber\\
        \cong & [H^2_{\fn}(T)]_0\nonumber\\
        \cong & [H^2_{\fm}(R)]_0.\label{thm:not-f-full-2}
\end{align}

We see from \eqref{thm:not-f-full-2}, \cref{prop:f-inj-in-degree-0-p-equals-2}\ref{prop:char2-example-injective-in-deg-0}, and \cref{prop:f-inj-in-degree-0-p-at-least-3}\ref{prop:char-p>2-example-injective-in-deg-0} that the Frobenius action on $H^2_{\fm_A}(A)$ is injective. Further, letting $\fM$ be the homogeneous maximal ideal of $T[u,v]$, the Frobenius action on $H^4_\fM(T[u,v])$ is injective by the above discussion and hence restricts to an injective Frobenius action on $H^3_{\fm_A}(A)$ by \eqref{thm:not-f-full-1} since $H^2_{\fn}(T) \# H^2_{(u,v)}(k[u,v])$ is a graded direct summand of $H^2_\fn(T) \otimes_k H^2_{(u,v)}(k[u,v]) \cong H^4_{\fM}(T[u,v])$. It follows that $A$ and $A_{\fm_A}$ are $F$-injective.

Now let $A' = A\otimes_k k^{1/p}$ and $R' = R\otimes_kk^{1/p}$ with homogeneous maximal ideals $\fm_{A'}$ and $\fm' = \fm R'$. Then Frobenius does not act injectively on $H^2_{\fm_{A'}}(A') \cong [H^2_{\fm'}(R')]_0$ by \cref{prop:f-inj-in-degree-0-p-equals-2}\ref{prop:char2-example-not-geo-F-inj-deg-0} and \cref{prop:f-inj-in-degree-0-p-at-least-3}\ref{prop:char-p>2-example-not-geo-F-inj-deg-0}. Then  $A'_{\fm_{A'}}$ is not $F$-full by \cref{F-full} and $A_{\fm_A}$ is not $F$-full by \cref{F-full-ascent}.
\end{proof}

\printbibliography

@misc{stacks-project,
  author       = {The {Stacks project authors}},
  title        = {The Stacks project},
  howpublished = {\url{https://stacks.math.columbia.edu}},
  year         = {2025},
}

@article {CGM89,
    AUTHOR = {Cumino, Caterina and Greco, Silvio and Manaresi, Mirella},
     TITLE = {Hyperplane sections of weakly normal varieties in positive
              characteristic},
   JOURNAL = {Proc. Amer. Math. Soc.},
  FJOURNAL = {Proceedings of the American Mathematical Society},
    VOLUME = {106},
      YEAR = {1989},
    NUMBER = {1},
     PAGES = {37--42},
       DOI = {10.2307/2047371},
}

@article {GW78,
    AUTHOR = {Goto, Shiro and Watanabe, Keiichi},
     TITLE = {On graded rings. {I}},
   JOURNAL = {J. Math. Soc. Japan},
  FJOURNAL = {Journal of the Mathematical Society of Japan},
    VOLUME = {30},
      YEAR = {1978},
    NUMBER = {2},
     PAGES = {179--213},
      ISSN = {0025-5645,1881-1167},
   MRCLASS = {13H10 (13D03 14B15)},
  MRNUMBER = {494707},
MRREVIEWER = {Gerald\ S.\ Garfinkel},
       DOI = {10.2969/jmsj/03020179},
}

@article {EH08,
    AUTHOR = {Enescu, Florian and Hochster, Melvin},
     TITLE = {The {F}robenius structure of local cohomology},
   JOURNAL = {Algebra Number Theory},
  FJOURNAL = {Algebra \& Number Theory},
    VOLUME = {2},
      YEAR = {2008},
    NUMBER = {7},
     PAGES = {721--754},
      ISSN = {1937-0652,1944-7833},
   MRCLASS = {13A35 (13D45 13F55)},
  MRNUMBER = {2460693},
MRREVIEWER = {Irena\ Swanson},
       DOI = {10.2140/ant.2008.2.721},
}

@article {KK18,
    AUTHOR = {Koll\'ar, J\'anos and Kov\'acs, S\'andor J.},
     TITLE = {Deformations of log canonical and {$F$}-pure singularities},
   JOURNAL = {Algebr. Geom.},
  FJOURNAL = {Algebraic Geometry},
    VOLUME = {7},
      YEAR = {2020},
    NUMBER = {6},
     PAGES = {758--780},
      ISSN = {2313-1691,2214-2584},
   MRCLASS = {14B07 (14G17)},
  MRNUMBER = {4156425},
MRREVIEWER = {Ana\ Bravo},
       DOI = {10.14231/ag-2020-027},
}

@article {SZ13,
    AUTHOR = {Schwede, Karl and Zhang, Wenliang},
     TITLE = {Bertini theorems for {$F$}-singularities},
   JOURNAL = {Proc. Lond. Math. Soc. (3)},
  FJOURNAL = {Proceedings of the London Mathematical Society. Third Series},
    VOLUME = {107},
      YEAR = {2013},
    NUMBER = {4},
     PAGES = {851--874},
      ISSN = {0024-6115,1460-244X},
   MRCLASS = {14F18 (13A35 14B05)},
  MRNUMBER = {3108833},
MRREVIEWER = {Maria\ Luisa\ Spreafico},
       DOI = {10.1112/plms/pdt007},
}

@article {HMS14,
    AUTHOR = {Horiuchi, Jun and Miller, Lance Edward and Shimomoto, Kazuma},
     TITLE = {Deformation of {$F$}-injectivity and local cohomology},
      NOTE = {With an appendix by Karl Schwede and Anurag K. Singh},
   JOURNAL = {Indiana Univ. Math. J.},
  FJOURNAL = {Indiana University Mathematics Journal},
    VOLUME = {63},
      YEAR = {2014},
    NUMBER = {4},
     PAGES = {1139--1157},
      ISSN = {0022-2518,1943-5258},
   MRCLASS = {13A35 (13D45 14B05)},
  MRNUMBER = {3263925},
MRREVIEWER = {Linquan\ Ma},
       DOI = {10.1512/iumj.2014.63.5313},
}

@article{Sin99,
    AUTHOR = {Singh, Anurag K.},
     TITLE = {Deformation of {$F$}-purity and {$F$}-regularity},
   JOURNAL = {J. Pure Appl. Algebra},
  FJOURNAL = {Journal of Pure and Applied Algebra},
    VOLUME = {140},
      YEAR = {1999},
    NUMBER = {2},
     PAGES = {137--148},
      ISSN = {0022-4049,1873-1376},
   MRCLASS = {13A35 (13H10)},
  MRNUMBER = {1693967},
MRREVIEWER = {Ian\ M.\ Aberbach},
       DOI = {10.1016/S0022-4049(98)00014-0},
}

@article {Abe01,
    AUTHOR = {Aberbach, Ian M.},
     TITLE = {Extension of weakly and strongly {F}-regular rings by flat
              maps},
   JOURNAL = {J. Algebra},
  FJOURNAL = {Journal of Algebra},
    VOLUME = {241},
      YEAR = {2001},
    NUMBER = {2},
     PAGES = {799--807},
      ISSN = {0021-8693,1090-266X},
   MRCLASS = {13A35},
  MRNUMBER = {1843326},
MRREVIEWER = {Irena\ Swanson},
       DOI = {10.1006/jabr.2001.8785},
}

@article {Sch09,
    AUTHOR = {Schwede, Karl},
     TITLE = {{$F$}-injective singularities are {D}u {B}ois},
   JOURNAL = {Amer. J. Math.},
  FJOURNAL = {American Journal of Mathematics},
    VOLUME = {131},
      YEAR = {2009},
    NUMBER = {2},
     PAGES = {445--473},
      ISSN = {0002-9327,1080-6377},
   MRCLASS = {14E15 (14E30)},
  MRNUMBER = {2503989},
MRREVIEWER = {Ana\ Bravo},
       DOI = {10.1353/ajm.0.0049},
}

@article {QGSS24,
    AUTHOR = {Quinlan-Gallego, Eamon and Simpson, Austyn and Singh, Anurag
              K.},
     TITLE = {Flat morphisms with regular fibers do not preserve
              {$F$}-rationality},
   JOURNAL = {Rev. Mat. Iberoam.},
  FJOURNAL = {Revista Matem\'atica Iberoamericana},
    VOLUME = {40},
      YEAR = {2024},
    NUMBER = {5},
     PAGES = {1989--2001},
      ISSN = {0213-2230,2235-0616},
   MRCLASS = {13A35 (14G17)},
  MRNUMBER = {4792555},
       DOI = {10.4171/rmi/1497},
}

@article{Har95,
    AUTHOR = {Hara, Nobuo},
     TITLE = {{$F$}-injectivity in negative degree and tight closure in
              graded complete intersection rings},
   JOURNAL = {C. R. Math. Rep. Acad. Sci. Canada},
  FJOURNAL = {La Soci\'et\'e{} Royale du Canada. L'Acad\'emie des Sciences.
              Comptes Rendus Math\'ematiques (Mathematical Reports)},
    VOLUME = {17},
      YEAR = {1995},
    NUMBER = {6},
     PAGES = {247--252},
   MRCLASS = {13A35 (13C40 13D45)},
  MRNUMBER = {1377399},
}

@article{Ene09,
    AUTHOR = {Enescu, Florian},
     TITLE = {Local cohomology and {F}-stability},
   JOURNAL = {J. Algebra},
  FJOURNAL = {Journal of Algebra},
    VOLUME = {322},
      YEAR = {2009},
    NUMBER = {9},
     PAGES = {3063--3077},
      ISSN = {0021-8693,1090-266X},
   MRCLASS = {13D45 (13A35)},
  MRNUMBER = {2567410},
MRREVIEWER = {Adela\ N.\ Vraciu},
       DOI = {10.1016/j.jalgebra.2009.04.025},
}

@article {Fed83,
    AUTHOR = {Fedder, Richard},
     TITLE = {{$F$}-purity and rational singularity},
   JOURNAL = {Trans. Amer. Math. Soc.},
  FJOURNAL = {Transactions of the American Mathematical Society},
    VOLUME = {278},
      YEAR = {1983},
    NUMBER = {2},
     PAGES = {461--480},
      ISSN = {0002-9947,1088-6850},
   MRCLASS = {13H10 (13D03 14B05)},
  MRNUMBER = {701505},
MRREVIEWER = {D.\ Kirby},
       DOI = {10.2307/1999165},
}

@article {MQ18,
    AUTHOR = {Ma, Linquan and Quy, Pham Hung},
     TITLE = {Frobenius actions on local cohomology modules and deformation},
   JOURNAL = {Nagoya Math. J.},
  FJOURNAL = {Nagoya Mathematical Journal},
    VOLUME = {232},
      YEAR = {2018},
     PAGES = {55--75},
      ISSN = {0027-7630,2152-6842},
   MRCLASS = {13A35 (13D45 14B05)},
  MRNUMBER = {3866500},
MRREVIEWER = {Geoffrey\ D.\ Dietz},
       DOI = {10.1017/nmj.2017.20},
}

@article {Ma14,
    AUTHOR = {Ma, Linquan},
     TITLE = {Finiteness properties of local cohomology for {$F$}-pure local
              rings},
   JOURNAL = {Int. Math. Res. Not. IMRN},
  FJOURNAL = {International Mathematics Research Notices. IMRN},
      YEAR = {2014},
    NUMBER = {20},
     PAGES = {5489--5509},
      ISSN = {1073-7928,1687-0247},
   MRCLASS = {13D45 (13A35)},
  MRNUMBER = {3271179},
MRREVIEWER = {Alberto\ F.\ Boix},
       DOI = {10.1093/imrn/rnt130},
}

@article {DDM21,
    AUTHOR = {Dao, Hailong and De Stefani, Alessandro and Ma, Linquan},
     TITLE = {Cohomologically full rings},
   JOURNAL = {Int. Math. Res. Not. IMRN},
  FJOURNAL = {International Mathematics Research Notices. IMRN},
      YEAR = {2021},
    NUMBER = {17},
     PAGES = {13508--13545},
      ISSN = {1073-7928,1687-0247},
   MRCLASS = {13A35 (13A02 13H10)},
  MRNUMBER = {4307694},
MRREVIEWER = {Kriti\ Goel},
       DOI = {10.1093/imrn/rnz203},
}

@article {HR74,
    AUTHOR = {Hochster, Melvin and Roberts, Joel L.},
     TITLE = {Rings of invariants of reductive groups acting on regular
              rings are {C}ohen-{M}acaulay},
   JOURNAL = {Advances in Math.},
  FJOURNAL = {Advances in Mathematics},
    VOLUME = {13},
      YEAR = {1974},
     PAGES = {115--175},
      ISSN = {0001-8708},
   MRCLASS = {13H10 (14M15)},
  MRNUMBER = {347810},
MRREVIEWER = {M.\ Nagata},
       DOI = {10.1016/0001-8708(74)90067-X},
}

@article {DM24,
    AUTHOR = {Datta, Rankeya and Murayama, Takumi},
     TITLE = {Permanence properties of {$F$}-injectivity},
   JOURNAL = {Math. Res. Lett.},
  FJOURNAL = {Mathematical Research Letters},
    VOLUME = {31},
      YEAR = {2024},
    NUMBER = {4},
     PAGES = {985--1027},
      ISSN = {1073-2780,1945-001X},
   MRCLASS = {13A35},
  MRNUMBER = {4831046},
       DOI = {10.4310/mrl.241118233550},
}

\end{document}